\patchcmd{\@maketitle}{\LARGE \@title}{\LARGE\bfseries\@title}{}{}
\renewcommand{\@seccntformat}[1]{\csname the#1\endcsname.\quad}
\definecolor{darkblue}{rgb}{0,0,.5}
\def\th@plain{%
	\thm@notefont{}
	\itshape 
}
\def\th@definition{%
	\thm@notefont{}
	\normalfont 
}
\renewenvironment{proof}[1][\proofname]{\par
	\normalfont
	\topsep0\p@\@plus3\p@ \trivlist
	\item[\hskip\labelsep\itshape
	#1\@addpunct{.}]\ignorespaces
}{%
	\qed\endtrivlist
}
\newtheorem{theorem}{Theorem}[section]
\newtheorem{lemma}{Lemma}[section]
\newtheorem{proposition}{Proposition}[section]
\theoremstyle{definition}
\theoremstyle{definition}
\newtheorem{example}{Example}[section]
\theoremstyle{definition}
\newtheorem{remark}{Remark}[section]
\theoremstyle{definition}
\newtheorem{algorithm}{Algorithm}[section]
\theoremstyle{definition}
\newtheorem{assumption}{Assumption}[section]
\newcommand{\dom}{\ensuremath{\operatorname{dom}}}
\newcommand{\argmin}{\ensuremath{\operatorname*{argmin}}}
\newcommand{\epi}{\ensuremath{\operatorname{epi}}}
\newcommand{\dist}{\ensuremath{\operatorname{dist}}}
\newcommand{\Ima}{\ensuremath{\operatorname{Im}}}
\newcounter{step}
\newcommand\step[1]{%
	\refstepcounter{step}	
	\vskip 0.25\baselineskip
	\ifx\hfuzz#1\hfuzz
		\item[~\(\triangleright\)~\textbf{Step~\arabic{step}.}]
	\else
		\item[~\(\triangleright\)~\textbf{Step~\arabic{step}}] (\texttt{#1})\textbf{.}%
	\fi
}
\begin{document}

\title{Bregman Proximal Linearized ADMM for Minimizing Separable Sums Coupled by a Difference of Functions}

\author{
Tan Nhat Pham\thanks{Centre for New Energy Transition Research, Federation University Australia, Ballarat, VIC 3353, Australia.
E-mail: \texttt{pntan.iac@gmail.com}.},
~
Minh N. Dao\thanks{School of Science, RMIT University, Melbourne, VIC 3000, Australia.
E-mail: \texttt{minh.dao@rmit.edu.au}.},
~
Andrew Eberhard\thanks{School of Science, RMIT University, Melbourne, VIC 3000, Australia.
E-mail: \texttt{andy.eberhard@rmit.edu.au}.},
~and
Nargiz Sultanova\thanks{Centre for Smart Analytics, Federation University Australia, Ballarat, VIC 3353, Australia.
E-mail: \texttt{n.sultanova@federation.edu.au}.}
}

\date{September 30, 2024}

\maketitle

\begin{abstract}
In this paper, we develop a splitting algorithm incorporating Bregman distances to solve a broad class of linearly constrained composite optimization problems, whose objective function is the separable sum of possibly nonconvex nonsmooth functions and a smooth function, coupled by a difference of functions. This structure encapsulates numerous significant nonconvex and nonsmooth optimization problems in the current literature including the linearly constrained difference-of-convex problems. Relying on the successive linearization and alternating direction method of multipliers (ADMM), the proposed algorithm exhibits the global subsequential convergence to a stationary point of the underlying problem. We also establish the convergence of the full sequence generated by our algorithm under the Kurdyka--{\L}ojasiewicz property and some mild assumptions. The efficiency of the proposed algorithm is tested on a robust principal component analysis problem and a nonconvex optimal power flow problem.
\end{abstract}

\noindent{\bf Keywords}: 
Alternating direction method of multipliers (ADMM),
Bregman distance,
composite optimization problem, 
difference of functions,
splitting algorithm,
linear constraints,
linearization,
multi-block structure, 
nonconvex optimization.

\smallskip
\noindent{\bf Mathematics Subject Classification (MSC 2020):}
90C26,	
49M27,	
65K05.	

\section{Introduction}

Throughout this paper, the set of nonnegative integers is denoted by $\mathbb{N}$, the set of real numbers by $\mathbb{R}$, the set of nonnegative real numbers by $\mathbb{R}_+$, and the set of the positive real numbers by $\mathbb{R}_{++}$. We use $\mathbb{R}^d$ to denote the $d$-dimensional Euclidean space with inner product $\langle \cdot, \cdot \rangle$ and Euclidean norm $\|\cdot\|$. For a given matrix $M$, $\Ima(M)$ denotes
its image.

We consider the linearly constrained composite optimization problem
\begin{align}\label{eq:prob}
\min_{x_i \in \mathbb{R}^{d_i}, y\in \mathbb{R}^q} ~F(\mathbf{x},y) :=\sum_{i=1}^m f_i(x_i)+H(y)+P(\mathbf{x})-G(\mathbf{x}) \quad\text{s.t.~~}  \sum_{i=1}^{m}A_ix_i + By=b,
\end{align}
where $\mathbf{x}=(x_1,\dots,x_m)$; for each $i\in \{1,\dots,m\}$, $f_i: \mathbb{R}^{d_i} \to (-\infty,+\infty]$ is a proper lower semicontinuous function; $H:\mathbb{R}^q \to \mathbb{R}$ is a differentiable (possibly
nonconvex) function whose gradient is Lipschitz continuous; $P:\mathbb{R}^{d_1}\times \dots \times \mathbb{R}^{d_m} \to \mathbb{R}$ is another differentiable (possibly nonconvex) function whose gradient is Lipschitz continuous; $G:\mathbb{R}^{d_1}\times \dots \times \mathbb{R}^{d_m} \to \mathbb{R}$ is a continuous (possibly nonsmooth) weakly convex function, $A_i \in \mathbb{R}^{p\times d_i}$, $B\in \mathbb{R}^{p\times q}$, and $b \in \mathbb{R}^p$. Problem \eqref{eq:prob} covers two important models in the literature. Firstly, when $G \equiv 0$, problem \eqref{eq:prob} reduces to the form
\begin{align}\label{problem:WTY}
\min_{x_i \in \mathbb{R}^{d_i}, y\in \mathbb{R}^q} ~\sum_{i=1}^m f_i(x_i)+H(y)+P(\mathbf{x}) \quad\text{s.t.~~}  \sum_{i=1}^{m}A_ix_i + By=b,
\end{align}
which is studied in \cite{Wang2018}. This class of problems, together with its special cases considered in \cite{Yang2017,Guo2016}, has a broad range of useful applications in image processing, matrix decomposition, statistical learning, and others \cite{Wang2018,ADMMsurvey}. In \cite{Wang2018}, the author studied the multi-block alternating direction method of multipliers (ADMM) in the nonconvex nonsmooth setting, and they introduced new assumptions such as restricted prox-regularity and Lipschitz subminimization path to establish the global convergence. A special case of \eqref{problem:WTY} in which $P\equiv0$ was considered in \cite{Yashtini2021}, and proximal terms and dual stepsize were incorporated into the ADMM to solve the problem. An inertial proximal splitting method was also developed in \cite{XWang2023} to revisit that problem. Other simpler special cases of \eqref{problem:WTY} can be addressed by variants of the three-block ADMM  \cite{Yang2017,Lin2017,RPCA}, or two-block ADMM \cite{Guo2016,2blockBregADMM,Li_2015}, and their convergence are well studied in the literature. Secondly, when $P \equiv 0$, $m =1$, $x_1 =x \in \mathbb{R}^d$, and $A_1 =A \in \mathbb{R}^{p\times d}$, problem \eqref{eq:prob} reduces to
\begin{align}\label{DCADMM}
\min_{x \in \mathbb{R}^d,y \in \mathbb{R}^q}~ f(x)-G(x)+H(y) \quad \text{s.t.~} Ax+By=c.
\end{align}
This is known as the linearly constrained difference-of-convex problem \cite{Sun2017,Tu2019}, in which $f$, $H$, and $G$ are all assumed to be convex. Problem \eqref{DCADMM} arises from the use of the difference-of-convex regularization terms, which has shown very promising performance \cite{Gasso2009,Yin2015}. One practical engineering problem that can be solved via problem \eqref{DCADMM} is the \emph{total variation image restoration problem} \cite{Osher2005,Sun2017,Tu2019}, which is one of the most fundamental issues in imaging science. The authors of \cite{Sun2017} developed a variant of the ADMM which incorporates the Bregman distance in updating steps for solving problem \eqref{DCADMM} in the case when $f$, $G$, and $H$ are convex functions, with $H$ being differentiable with a Lipschitz continuous gradient. A hybrid Bregman ADMM was then proposed in \cite{Tu2019} to revisit problem \eqref{DCADMM} with the same assumptions on $f$, $G$, and $H$. This version introduces an additional step to evaluate the convex conjugate of $G$, allowing the flexibility in choosing the subgradient step or proximal step to update the solution by choosing a control parameter to either activate or deactivate the additional step. By this strategy, this algorithm can include the one in \cite{Sun2017} as a special case. However, further research on more general form of problem \eqref{DCADMM} is still not available in the literature, to the best of the authors' knowledge. 

Inspired by the success of those previous studies, we aim to study a class of problems that can cover both \eqref{problem:WTY} and \eqref{DCADMM}. We develop a splitting algorithm based on ADMM to solve problem \eqref{eq:prob}, which not only includes problem \eqref{problem:WTY} as a special case, but also extends problem \eqref{DCADMM} into a multi-block, nonsmooth, and nonconvex setting. Bregman distances are also incorporated into the algorithm, making it more general and allowing the flexibility in choosing the appropriate proximal terms to solve the subproblems efficiently. The sequence generated by our algorithm is bounded and each of its cluster points is a stationary point of the problem. We also prove the convergence of the full sequence under the Kurdyka––{\L}ojasiewicz property of a suitable function. 

Compared to the two closely related works in \cite{Sun2017,Tu2019}, the convexity of $f_i$ is not required, $H$ can be nonconvex, and $G$ can also be weakly convex. In other words, the convexity requirement in our work is weaker, allowing a more general class of structured optimization problems. For the convergence of the full sequence, we do not require the restricted prox-regularity assumption on $f_i$ which was used in \cite{Wang2018}. Also, although we need the assumption that $B$ has full column rank, we do not require any assumptions on the column rank of any matrices $A_i$.

In the following examples, we present two practical problems which motivate our research.

\begin{example}[Robust principal component analysis (RPCA)\cite{RPCA}]
\label{ex:RPCA}
The RPCA problem is formulated as
\begin{align*}
\min_{L,S \in \mathbb{R}^{m\times d}} \|L\|_{*} + \tau\|S\|_1  + \frac{\gamma}{2}\|L+S-M\|_F^2, 
\end{align*}
where $M$ is a given $m\times d$ observation matrix, $\|\cdot\|_{*}$ is the nuclear norm, $\|\cdot\|_1$ is the component-wise $L_1$ norm (defined as the sum of absolute values of all entries) which controls the sparsity of the solution, and $\|\cdot\|_F$ is the standard Frobenius norm which controls the noise level, where $\tau$ and $\gamma$ are positive penalty constants. By introducing a new variable $T=L+S$, the problem is rewritten as 
\begin{align*}
\min_{L,S,T \in \mathbb{R}^{m\times d}} \|L\|_{*} + \tau\|S\|_1  + \frac{\gamma}{2}\|T-M\|_F^2 \quad \text{s.t.~} T=L+S.
\end{align*}
This problem can be addressed by using the three-block ADMM as shown in \cite{RPCA}. Recently, $L_1-L_2$ regularization has been shown to have better performance than $L_1$ alone (see \cite{Lou2017} and references therein). That motivates us to investigate the effects of $\|\cdot\|_1-\|\cdot\|$ on this problem, where $\|\cdot\|$ is the  spectral norm. By using this modified regularization term, the optimization problem fits into the structure of \eqref{eq:prob}. We will later solve this problem using our proposed algorithm in Section~\ref{sec:numerical_result}.
\end{example}

\begin{example}[Direct current optimal power flow (DC-OPF) \cite{Abraham2018}]
\label{ex:DCOPF}
Optimal power flow is an important problem in power system engineering \cite{Abdi2017}. The problem's simple formulation is given as follows. Let $x_i \in \mathbb{R}^{d_i}$ be the variable associated with the $i^{th}$ node in the network. The mathematical model is then given by
\begin{align*}
\min_{x_i \in \mathbb{R}^{d_i}}~~ \sum_{i=1}^{m} \left(\frac{1}{2}x_i^\top Q_i x_i + q_i^\top x_i\right) \quad \text{s.t.~} \sum_{i=1}^{m}A_ix_i \leq b,
\end{align*}
where $Q_i$, $A_i$, $q_i$, and $b$ are coefficient matrices and vectors. The authors of \cite{Abraham2018} have successfully used multi-block ADMM to solve this problem with promising results. As modern electrical grids require the integration of renewable energy resources such as photovoltaic (PV) systems, it is necessary to find their optimal locations in the network while satisfying the power flow. This leads to the introduction of binary variables into the OPF formulation. Binary relaxation and reformulation into difference-of-convex program have been successfully used to address this problem \cite{Tan2023}. However, the algorithm proposed in \cite{Tan2023} has to rely on solvers for the subproblems. Therefore, it is tempting to investigate whether using ADMM-based algorithms can further decompose the problem and reduce the computational complexity. In Section~\ref{sec:numerical_result} we will also give the detailed formulation to show how it fits into the structure of \eqref{eq:prob}, and address it with our algorithm.
\end{example}

The rest of the paper is organized as follows. In Section~\ref{sec:preliminaries}, the preliminary materials used in this work are presented. In Section~\ref{sec:algorithm}, we introduce our algorithm, and prove the subsequential and full sequential convergence of the proposed algorithm under suitable assumptions. Section~\ref{sec:numerical_result} presents the numerical results of the algorithm. Finally, the conclusions are given in Section~\ref{sec:conclusion}.

\section{Preliminaries}
\label{sec:preliminaries}

Let $f\colon \mathbb{R}^d\to \left(-\infty,+\infty\right]$. The \emph{domain} of $f$ is $\dom f :=\{x\in \mathbb{R}^d: f(x) <+\infty\}$ and the \emph{epigraph} of $f$ is $\epi f := \{(x,\xi)\in \mathbb{R}^d\times \mathbb{R}: f(x)\leq \xi\}$. The function $f$ is said to be \emph{proper} if $\dom f \neq \varnothing$ and it never takes the value $-\infty$, \emph{lower semicontinuous} if its epigraph is closed, and \emph{convex} if its epigraph is convex. The function $f$ is said to be $\alpha$-\emph{strongly convex} ($\alpha\in \mathbb{R}_+$) if $f-\frac{\alpha}{2}\|\cdot\|^2$ is convex, and $\beta$-\emph{weakly convex} ($\beta\in \mathbb{R}_+$) if $f+\frac{\beta}{2}\|\cdot\|^2$ is convex. We say that $f$ is \emph{coercive} if $f(x)\to +\infty$ as $\|x\|\to +\infty$.

Let $x\in \mathbb{R}^d$ with $|f(x)| <+\infty$. The \emph{regular} (or \emph{Fr\'echet}) \emph{subdifferential} of $f$ at $x$ is defined by
\begin{equation*}
\widehat{\partial} f(x) :=\left\{x^*\in \mathbb{R}^d:\; \liminf_{y\to x}\frac{f(y)-f(x)-\langle x^*,y-x\rangle}{\|y-x\|}\geq 0\right\}
\end{equation*}
and the \emph{limiting} (or \emph{Mordukhovich}) \emph{subdifferential} of $f$ at $x$ is defined by
\begin{equation*}
\partial_L f(x) :=\left\{x^*\in \mathbb{R}^d:\; \exists x_n\stackrel{f}{\to}x,\; x_n^*\to x^* \text{~~with~~} x_n^*\in \widehat{\partial} f(x_n)\right\},
\end{equation*}
where the notation $y\stackrel{f}{\to}x$ means $y\to x$ with $f(y)\to f(x)$. In the case where $\lvert f(x) \rvert =+\infty$, both regular subdifferential and limiting subdifferential of $f$ at $x$ are defined to be the empty set. The \emph{domain} of $\partial_L f$ is given by $\dom \partial_L f :=\{x\in \mathbb{R}^d: \partial_L f(x) \neq \varnothing\}$. 
We now collect some important properties of the limiting subdifferential.

\begin{lemma}[Calculus rules]
\label{l:cal}
Let $f, g\colon \mathbb{R}^d \to (-\infty, +\infty]$ be proper lower semicontinuous functions, and let $x \in \mathbb{R}^d$. 
\begin{enumerate}
\item\label{l:cal_sum} 
(Sum rule) Suppose that $f$ is finite at $x$ and $g$ is locally Lipschitz around $\overline{x}$. Then $\partial_L (f + g)(\overline{x}) \subseteq \partial_L f(\overline{x})+\partial_L g(\overline{x})$. Moreover, if $g$ is strictly differentiable at $\overline{x}$, then $\partial_L(f + g)(\overline{x}) = \partial_Lf(\overline{x}) + \nabla g(\overline{x})$.
\item\label{l:cal_sep} 
(Separable sum rule) If $f(\mathbf{x})=\sum_{i=1}^{m}f_i(x_i)$ with $\mathbf{x}=(x_1,\dots,x_m)$, then $\partial_L f(\mathbf{x}) = \partial_L f_1(x_1) \times \dots \times \partial_L f_m(x_m)$. 
\end{enumerate}
\end{lemma}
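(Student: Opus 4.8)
The plan is to prove these as two separate but related facts about the limiting subdifferential, both of which are standard in variational analysis (and can be found, e.g., in Mordukhovich or Rockafellar--Wets). I would present them as known results with proof sketches rather than proving from scratch, since they are workhorse lemmas.

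\medskip

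\noindent\textbf{Part \ref{l:cal_sum} (Sum rule).}
For the inclusion, the key tool is the \emph{fuzzy/approximate sum rule} for the regular subdifferential combined with a limiting passage. First I would recall that when $g$ is locally Lipschitz around $\overline{x}$, it is in particular finite and continuous there, so sums behave well. The strategy is: take $x^* \in \widehat{\partial}(f+g)(\overline{x})$ and apply the fuzzy sum rule to produce, for each $\varepsilon>0$, points $u,v$ near $\overline{x}$ with $u^*\in\widehat{\partial}f(u)$, $v^*\in\widehat{\partial}g(v)$ and $x^*\approx u^*+v^*$; the local Lipschitz continuity of $g$ bounds $v^*$ and keeps the subdifferential of $g$ nonempty and stable. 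Passing $\varepsilon\to 0$ and using the definition of $\partial_L$ as the outer limit of regular subdifferentials yields $\partial_L(f+g)(\overline{x})\subseteq\partial_L f(\overline{x})+\partial_L g(\overline{x})$. For the moreover part, when $g$ is strictly differentiable the subdifferential collapses: strict differentiability gives $\widehat{\partial}g(x)=\partial_L g(x)=\{\nabla g(\overline{x})\}$ near $\overline{x}$ and, crucially, the difference quotient defining the regular subdifferential is \emph{unchanged} by subtracting a strictly differentiable function up to the gradient term. This lets me show both inclusions directly: $x^*\in\widehat{\partial}(f+g)(\overline{x})$ iff $x^*-\nabla g(\overline{x})\in\widehat{\partial}f(\overline{x})$, and then take limits to get the equality $\partial_L(f+g)(\overline{x})=\partial_L f(\overline{x})+\nabla g(\overline{x})$.

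\medskip

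\noindent\textbf{Part \ref{l:cal_sep} (Separable sum rule).}
Here the plan is more elementary and self-contained. Writing $\mathbf{x}=(x_1,\dots,x_m)$ and $f(\mathbf{x})=\sum_i f_i(x_i)$, I would first establish the separable rule for the \emph{regular} subdifferential by direct inspection of the defining $\liminf$: because the objective and the inner product both split additively across the blocks, and because one may approach $\mathbf{x}$ along each coordinate block independently (take $\mathbf{y}$ differing from $\mathbf{x}$ in only one block at a time), the single $\liminf\ge 0$ condition in $\mathbb{R}^{d_1}\times\cdots\times\mathbb{R}^{d_m}$ is equivalent to the conjunction of the $m$ one-block conditions. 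This gives $\widehat{\partial}f(\mathbf{x})=\widehat{\partial}f_1(x_1)\times\cdots\times\widehat{\partial}f_m(x_m)$. I would then lift this to the limiting subdifferential: since $\partial_L$ is the sequential outer limit of $\widehat{\partial}$, and a product of outer limits equals the outer limit of the products (using that convergence in the product space is equivalent to blockwise convergence, together with $\mathbf{x}_n\stackrel{f}{\to}\mathbf{x}$ being equivalent to $x_{i,n}\stackrel{f_i}{\to}x_i$ for every $i$), the Cartesian-product structure is preserved in the limit.

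\medskip

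\noindent The main obstacle I anticipate is not conceptual but one of rigor and sourcing: the sum-rule inclusion genuinely requires the fuzzy sum rule (or an equivalent deep result), so the honest path is to cite it rather than reprove it, and care must be taken that the hypothesis ``$g$ locally Lipschitz'' is exactly what guarantees the boundedness/nonemptiness needed to pass to the limit. For the separable rule, the only delicate point is justifying that blockwise approach suffices to recover the full $\liminf$ condition; I would handle this by noting that the product-space $\liminf$ dominates each one-block $\liminf$ (forcing ``$\supseteq$'' on the regular subdifferentials is the easy direction) while the reverse follows from the additive splitting of the quotient and the fact that $\|\mathbf{y}-\mathbf{x}\|^2=\sum_i\|y_i-x_i\|^2$. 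Given the standard nature of both results, I would keep the write-up to sketch level and defer the technical fuzzy-calculus machinery to the cited references.
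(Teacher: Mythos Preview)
Your proposal is correct and, in spirit, aligned with the paper: the paper's own ``proof'' simply cites \cite[Proposition~1.107(ii) and Theorem~3.36]{Mordukhovich2006} for part~\ref{l:cal_sum} and \cite[Proposition~10.5]{Rockafellar1998} for part~\ref{l:cal_sep}, with no further argument. Your write-up goes further by supplying proof sketches (fuzzy sum rule plus limiting passage for \ref{l:cal_sum}; regular-subdifferential product identity lifted to the limiting level for \ref{l:cal_sep}), which is more than the paper does but entirely consistent with it. One small imprecision worth tightening: in the ``moreover'' clause you write that strict differentiability gives $\widehat{\partial}g(x)=\{\nabla g(\overline{x})\}$ \emph{near} $\overline{x}$, but strict differentiability at $\overline{x}$ does not guarantee differentiability at nearby points; what you actually need (and what the strict-differentiability limit gives directly) is that for $x$ close to $\overline{x}$ the regular subdifferential of $f+g$ at $x$ equals $\widehat{\partial}f(x)+\nabla g(\overline{x})$ up to a vanishing error, which suffices for the limiting passage. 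Similarly, your claim that $\mathbf{x}_n\stackrel{f}{\to}\mathbf{x}$ is equivalent to $x_{i,n}\stackrel{f_i}{\to}x_i$ for all $i$ deserves one line of justification via lower semicontinuity of each $f_i$.
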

\begin{proof}
\ref{l:cal_sum} follows from \cite[Proposition~1.107(ii) and Theorem~3.36]{Mordukhovich2006}, while \ref{l:cal_sep} follows from \cite[Proposition~10.5]{Rockafellar1998}.
\end{proof}

\begin{lemma}[Upper semicontinuity of subdifferential]
\label{l:upsemicont}
Let $f\colon \mathbb{R}^d \to [-\infty, +\infty]$ be Lipschitz continuous around $x \in \mathbb{R}^d$ with $|f(x)| <+\infty$, and consider sequences $(x_n)_{n\in \mathbb{N}}$ and $(x_n^*)_{n\in \mathbb{N}}$ in $\mathbb{R}^d$ such that $x_n \to x$ and, for all ${n\in \mathbb{N}}$, $x_n^* \in \partial_L f(x_n)$. 
Then $(x_n^*)_{n\in \mathbb{N}}$ is bounded and its cluster points are contained in $\partial_L f(x)$.
\end{lemma}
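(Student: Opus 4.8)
The plan is to establish the two assertions separately: first that $(x_n^*)_{n\in\mathbb{N}}$ is bounded, and then that every cluster point lies in $\partial_L f(x)$. Both rest on a single elementary fact---that subgradients of a locally Lipschitz function are controlled by its Lipschitz modulus---together with a diagonal extraction to close up the limiting process. To begin, I would invoke the hypothesis to fix $r>0$ and $L\ge0$ such that $f$ is finite and $L$-Lipschitz on the open ball $B(x,r)$, i.e. $|f(y)-f(z)|\le L\|y-z\|$ for all $y,z\in B(x,r)$.

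For boundedness, the key step is the bound $\|w\|\le L$ for every $w\in\widehat{\partial} f(z)$ with $z\in B(x,r)$. To see this, fix a unit vector $u$ and feed the points $y=z+tu$ into the $\liminf$ defining $\widehat{\partial} f(z)$; since $\|y-z\|=t$, for every $\varepsilon>0$ and all small $t>0$ one obtains $t^{-1}\bigl(f(z+tu)-f(z)-t\langle w,u\rangle\bigr)\ge-\varepsilon$, whence $\langle w,u\rangle\le t^{-1}\bigl(f(z+tu)-f(z)\bigr)+\varepsilon\le L+\varepsilon$. Letting $t\to0$ and then $\varepsilon\to0$, and choosing $u=w/\|w\|$ (the case $w=0$ being trivial), yields $\|w\|\le L$. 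Passing to limits along the sequences defining the limiting subdifferential transfers this bound: any $z^*\in\partial_L f(z)$ with $z\in B(x,r)$ also satisfies $\|z^*\|\le L$. Since $x_n\to x$, all but finitely many $x_n$ lie in $B(x,r)$, so $\|x_n^*\|\le L$ for large $n$; the finitely many remaining terms are finite, giving boundedness.

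For the cluster points, I would take a subsequence with $x_{n_k}^*\to x^*$. Local Lipschitz continuity makes $f$ continuous at $x$, so $f(x_{n_k})\to f(x)$, that is, $x_{n_k}\stackrel{f}{\to}x$. It then remains to unfold each limiting subgradient into a regular one: because $x_{n_k}^*\in\partial_L f(x_{n_k})$, the definition of $\partial_L$ supplies a point $y_k$ together with $y_k^*\in\widehat{\partial} f(y_k)$ such that $\|y_k-x_{n_k}\|$, $|f(y_k)-f(x_{n_k})|$, and $\|y_k^*-x_{n_k}^*\|$ are all below $1/k$. A direct diagonal estimate then gives $y_k\to x$, $f(y_k)\to f(x)$, and $y_k^*\to x^*$ with $y_k^*\in\widehat{\partial} f(y_k)$, which is precisely the defining condition for $x^*\in\partial_L f(x)$.

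The only genuinely technical point is this last diagonal unfolding, and even it is routine: it amounts to the closedness of the graph of $\partial_L f$ under $f$-attentive convergence, a foundational property of the limiting subdifferential that could alternatively be quoted directly (for instance from \cite{Mordukhovich2006} or \cite{Rockafellar1998}). The boundedness half is entirely elementary once the subgradient-norm bound $\|w\|\le L$ is in hand, so I would expect the write-up to be short, with that norm bound as the one computation worth spelling out in detail.
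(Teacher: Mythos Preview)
Your proof is correct. The paper itself does not give an argument at all: its entire proof reads ``This follows from \cite[Lemma~2.2]{Tan2023}.'' So the comparison is simply that you supply a self-contained derivation---the Lipschitz bound $\|w\|\le L$ on regular subgradients, its passage to limiting subgradients, and a diagonal extraction for the closure part---whereas the paper defers to an external reference. Your closing remark that the closure step could alternatively be quoted from \cite{Mordukhovich2006} or \cite{Rockafellar1998} is exactly in the spirit of what the paper does; the added value of your write-up is that it makes the result independent of those sources and exposes the one genuine computation (the norm bound) explicitly.
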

\begin{proof}
This follows from \cite[Lemma 2.2]{Tan2023}.
\end{proof}

We recall the concept of the well-known Bregman distance in a real Hilbert space $\mathcal{H}$, which was introduced in \cite{Bregman}. Given a differentiable convex function $\phi: \mathcal{H} \to \mathbb{R}$, the \emph{Bregman distance} associated with $\phi$ is defined as 
\begin{align*}
\forall u, v\in \mathcal{H},\quad D_{\phi}(u,v)=\phi(u)-\phi(v)-\langle \nabla\phi (v),u-v\rangle,
\end{align*}
When $\phi(x)=\|x\|^2$, then $D_{\phi}(u,v)=\|u-v\|^2$. When $\phi(x)=x^\top M x$, where $M$ is a positive semidefinite matrix, then $D_{\phi}(u,v) = \|u-v\|^2_M :=(u-v)^\top M (u-v)$. When $\mathcal{H}=\mathbb{R}^{p\times d}$ and $\phi(X)=\|X\|_F^2$, then $D_{\phi}(U,V) =\|U-V\|_F^2$ \cite{Dhillon2008}. Some useful properties of the Bregman distance are listed in the following proposition.

\begin{proposition}
\label{p:Bregman}
Let $\phi: \mathcal{H} \to \mathbb{R}$ be a differentiable convex function and $D_{\phi}$ is the Bregman distance associated with $\phi$. Then the following hold:
\begin{enumerate}
\item\label{p:Bregman_1} 
For all $u, v \in \mathcal{H}$, $D_{\phi}(u, v)\geq 0$, and $D_{\phi}(u, v)=0$ if and only if $u = v$. 

\item\label{p:Bregman_2} 
For each $v \in \mathcal{H}$, $D_{\phi}(\cdot, v)$ is convex. 

\item\label{p:Bregman_3} 
If $\phi$ is $\alpha$-strongly convex,  then, for all $u, v \in \mathcal{H}$, $D_{\phi}(u, v) \geq \frac{\alpha}{2} \|u-v\|^2$. 

\item\label{p:Bregman_4} 
If $\nabla \phi$ is $\ell_{\phi}$-Lipschitz continuous, then, for all $u, v \in \mathcal{H}$, $D_{\phi}(u,v)\leq \frac{\ell_{\phi}}{2}\|u-v\|^2$. 
\end{enumerate}
\end{proposition}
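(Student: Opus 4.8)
The plan is to derive all four assertions directly from the definition $D_{\phi}(u,v)=\phi(u)-\phi(v)-\langle\nabla\phi(v),u-v\rangle$ together with the first- and second-order characterizations of (strong) convexity and the descent lemma for functions with Lipschitz gradient. Since $\phi$ is differentiable and convex, the workhorse throughout is the gradient inequality $\phi(u)\geq\phi(v)+\langle\nabla\phi(v),u-v\rangle$, valid for all $u,v\in\mathcal{H}$; each item then reduces to a short rearrangement of the definition.

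For \ref{p:Bregman_1}, the gradient inequality is precisely the statement $D_{\phi}(u,v)\geq 0$, so nonnegativity is immediate. Substituting $u=v$ gives $D_{\phi}(v,v)=0$, which settles one direction of the equivalence. The reverse implication is the only delicate point: from mere convexity one cannot conclude $u=v$ (for an affine $\phi$, and more generally for $\phi(x)=x^\top M x$ with $M$ only positive semidefinite, the Bregman distance vanishes for certain $u\neq v$), so here I would invoke strict convexity of $\phi$ — satisfied by the concrete generators used in the paper, such as $\phi=\|\cdot\|^2$ — under which the gradient inequality is strict whenever $u\neq v$, forcing $D_{\phi}(u,v)>0$. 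For \ref{p:Bregman_2}, I would observe that, with $v$ fixed, the map $u\mapsto D_{\phi}(u,v)=\phi(u)-\bigl(\phi(v)+\langle\nabla\phi(v),u-v\rangle\bigr)$ differs from $\phi$ only by an affine function of $u$; since adding an affine term preserves convexity, $D_{\phi}(\cdot,v)$ inherits the convexity of $\phi$.

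The remaining two items strengthen the gradient inequality quantitatively. For \ref{p:Bregman_3}, $\alpha$-strong convexity of $\phi$ is equivalent to $\phi(u)\geq\phi(v)+\langle\nabla\phi(v),u-v\rangle+\frac{\alpha}{2}\|u-v\|^2$; subtracting $\phi(v)+\langle\nabla\phi(v),u-v\rangle$ from both sides yields $D_{\phi}(u,v)\geq\frac{\alpha}{2}\|u-v\|^2$ at once. For \ref{p:Bregman_4}, I would use the descent lemma: when $\nabla\phi$ is $\ell_{\phi}$-Lipschitz, the quadratic upper bound $\phi(u)\leq\phi(v)+\langle\nabla\phi(v),u-v\rangle+\frac{\ell_{\phi}}{2}\|u-v\|^2$ holds, and the identical rearrangement gives $D_{\phi}(u,v)\leq\frac{\ell_{\phi}}{2}\|u-v\|^2$.

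None of the four requires a genuinely hard argument; the only real work is justifying the two auxiliary inequalities in the Hilbert-space setting, and both follow from the integral representation $\phi(u)-\phi(v)=\int_0^1\langle\nabla\phi(v+t(u-v)),u-v\rangle\,dt$. For \ref{p:Bregman_4} one bounds the integrand via the Lipschitz estimate $\|\nabla\phi(v+t(u-v))-\nabla\phi(v)\|\leq\ell_{\phi}t\|u-v\|$ together with Cauchy--Schwarz, while for \ref{p:Bregman_3} one uses the strong-monotonicity estimate $\langle\nabla\phi(v+t(u-v))-\nabla\phi(v),u-v\rangle\geq\alpha t\|u-v\|^2$; integrating in $t$ over $[0,1]$ produces the factor $\frac{1}{2}$ in each case. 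The single point I would flag for care is the equality characterization in \ref{p:Bregman_1}, where it is strict convexity, rather than plain convexity, that actually makes the ``only if'' direction true.
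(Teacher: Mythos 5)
Your proof is correct, and it follows essentially the route the paper intends: the paper's own ``proof'' consists of citing Bregman's original article for item (i) and asserting that (ii)--(iv) ``can be easily verified by using the definitions,'' which is exactly the verification you carry out (affine perturbation for (ii), the strong-convexity and descent-lemma inequalities, rearranged, for (iii) and (iv)). The one substantive point you raise is well taken and is in fact a genuine imprecision in the paper's statement rather than a gap in your argument: under mere differentiable convexity the ``only if'' direction of $D_{\phi}(u,v)=0\iff u=v$ is false, as the paper's own example $\phi(x)=x^{\top}Mx$ with $M$ positive semidefinite but singular already shows (there $D_{\phi}(u,v)=\|u-v\|_M^2$ vanishes on $u-v\in\ker M$). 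The equivalence requires strict convexity of $\phi$, which is the hypothesis in Bregman's original setting; note, though, that in the paper's actual use of the proposition only item \ref{p:Bregman_3} with $\alpha$-strong convexity is invoked for the generators $\phi_i$, so the algorithmic results are unaffected. Your integral-representation justification of the two quantitative inequalities in the Hilbert-space setting is a reasonable way to make the ``easily verified'' claims rigorous.
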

\begin{proof}
\ref{p:Bregman_1} is given in \cite[Section 1]{Bregman}. \ref{p:Bregman_2}, \ref{p:Bregman_3}, and \ref{p:Bregman_4} can be easily verified by using the definitions. 
\end{proof}

We end this section by the following lemma which will be instrumental in proving the convergence results in the next section. 
\begin{lemma}
\label{l:diff}
Let $h\colon \mathbb{R}^d\to \mathbb{R}$ be a differentiable function with $\ell$-Lipschitz continuous gradient. Then the following hold:
\begin{enumerate}
\item\label{l:diff_descent}
For all $x, y\in \mathbb{R}^d$, $|h(y) -h(x) -\langle \nabla h(x), y -x\rangle|\leq \frac{\ell}{2}\|y -x\|^2$.
\item\label{l:diff_bounded}
If $h$ is bounded below and $\ell >0$, then
\begin{align*}
\inf_{x\in \mathbb{R}^d} \left(h(x) -\frac{1}{2\ell}\|\nabla h(x)\|^2\right) >-\infty.    
\end{align*}
\end{enumerate}  
\end{lemma}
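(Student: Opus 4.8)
The plan is to derive part~\ref{l:diff_descent} from the fundamental theorem of calculus and then use it as a black box to obtain part~\ref{l:diff_bounded} by evaluating it at a suitable gradient step.

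For \ref{l:diff_descent}, I would first write the increment of $h$ along the segment joining $x$ to $y$ as an integral of its gradient. Since $h$ is differentiable with a (hence continuous) gradient, the fundamental theorem of calculus gives $h(y) -h(x) =\int_0^1 \langle \nabla h(x +t(y -x)), y -x\rangle\,dt$. Subtracting the linear term $\langle \nabla h(x), y -x\rangle$ and absorbing it into the integrand turns the latter into $\langle \nabla h(x +t(y -x)) -\nabla h(x), y -x\rangle$. I would then bound the absolute value of the integral by the integral of the absolute values, apply the Cauchy--Schwarz inequality followed by the $\ell$-Lipschitz estimate $\|\nabla h(x +t(y -x)) -\nabla h(x)\| \leq \ell t\|y -x\|$, and finally compute $\int_0^1 \ell t\,dt =\ell/2$. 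This produces the two-sided bound directly, since the same chain of inequalities controls $|h(y) -h(x) -\langle \nabla h(x), y -x\rangle|$.

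For \ref{l:diff_bounded}, the key idea is to choose a clever comparison point and invoke \ref{l:diff_descent}. Fixing $x\in \mathbb{R}^d$, I would set $y =x -\frac{1}{\ell}\nabla h(x)$, the gradient step of stepsize $1/\ell$. Applying the upper bound from \ref{l:diff_descent} and substituting $y -x =-\frac{1}{\ell}\nabla h(x)$ gives, after combining the linear and quadratic contributions, $h(y) \leq h(x) -\frac{1}{2\ell}\|\nabla h(x)\|^2$. Rearranging yields $h(x) -\frac{1}{2\ell}\|\nabla h(x)\|^2 \geq h(y) \geq \inf_{z\in \mathbb{R}^d} h(z)$, and since $h$ is bounded below the right-hand side is a finite constant independent of $x$. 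Taking the infimum over $x$ delivers the claim.

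I do not anticipate a genuine obstacle here, as both statements are classical. The only point requiring a little foresight is the choice of stepsize $1/\ell$ in part~\ref{l:diff_bounded}, which is precisely the value that makes the linear and quadratic terms collapse into $-\frac{1}{2\ell}\|\nabla h(x)\|^2$; any other stepsize would leave a worse constant. A minor technical care is needed to justify the integral representation and the interchange of absolute value with integration, but these are routine under the stated differentiability and Lipschitz hypotheses.
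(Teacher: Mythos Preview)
Your proposal is correct and follows essentially the same approach as the paper. For part~\ref{l:diff_descent} the paper simply cites \cite[Lemma~1.2.3]{Nesterov2018}, whose proof is precisely the integral argument you outline, and for part~\ref{l:diff_bounded} the paper uses the identical gradient-step choice $y = x - \frac{1}{\ell}\nabla h(x)$ together with part~\ref{l:diff_descent} and the lower boundedness of $h$.
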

\begin{proof}
\ref{l:diff_descent}: This follows from \cite[Lemma~1.2.3]{Nesterov2018}.

\ref{l:diff_bounded}: Let $x\in \mathbb{R}^d$. Following the argument of \cite[Equation~(1.2.19)]{Nesterov2018}, we apply \ref{l:diff_descent} with $y =x -\frac{1}{\ell}\nabla h(x)$ to obtain that
\begin{align*}
h\left(x -\frac{1}{\ell}\nabla h(x)\right) &\leq h(x) +\langle \nabla h(x), y -x\rangle +\frac{\ell}{2}\|y -x\|^2 \\
&= h(x) -\frac{1}{\ell}\|\nabla h(x)\|^2 +\frac{1}{2\ell}\|\nabla h(x)\|^2 \\
&= h(x) -\frac{1}{2\ell}\|\nabla h(x)\|^2,
\end{align*}
which together with the boundedness of $h$ implies the conclusion.
\end{proof}

\section{Proposed algorithm and convergence analysis}
\label{sec:algorithm} 

We first fix some notations which are used throughout the paper from now on. Let $\mathbf{A}=[A_1|A_2|\dots|A_m]$ and let $\mathbf{I}_{q\times q}$ be the $q\times q$ identity matrix. Then $\mathbf{Ax}=\sum_{i=1}^{m}A_ix_i$. We also denote by $\partial_L^{x}$ the limiting subdifferential with respect to the $x$-variable, and by $\nabla_i f$ the $i^{th}$ block of the gradient vector $\nabla f$. Given a matrix $M$, $\lambda_{\min}(M)$ denotes its smallest eigenvalue while $\lambda_{\max}(M)$ denotes its largest eigenvalue. The following assumptions are used in our convergence analysis
\begin{assumption}[Standing assumptions]
\label{a:standing}
~
\begin{enumerate}
\item 
$\nabla H$ is $\ell_H$-Lipschitz continuous, $\nabla P$ is $\ell_P$-Lipschitz continuous, and $G$ is $\beta$-weakly convex.
\item\label{a:standing_image} 
$\Ima(\mathbf{A}) \bigcup \{b \}\subseteq \Ima(B)$.
\item\label{a:standing_rank} 
 $\lambda :=\lambda_{\min}(B^\top B) >0$ (equivalently, $B$ has full column rank).
\end{enumerate}
\end{assumption}

We note from \cite[Section~4.1]{Wang2018} that Assumption~\ref{a:standing}\ref{a:standing_image} is crucial for the convergence of ADMM methods in nonconvex settings and generally cannot be completely removed. For this assumption to hold, a sufficient condition is that $B$ has full row rank.

Recall that the augmented Lagrangian of problem \eqref{eq:prob} is given by
\begin{align*}
\mathcal{L}_\rho(\mathbf{x},y,z)=\sum_{i=1}^m f_i(x_i)+H(y)+P(\mathbf{x})-G(\mathbf{x}) +  \langle z, \mathbf{Ax}+By-b \rangle +\frac{\rho}{2}\|\mathbf{Ax}+By-b\|^2,
\end{align*}
where $z\in \mathbb{R}^p$ and $\rho\in \mathbb{R}_{++}$. Now, we present our splitting algorithm with guaranteed global convergence to a stationary point $(\overline{\mathbf{x}},\overline{y},\overline{z})$ of $\mathcal{L}_\rho$, i.e., $0 \in \partial_L \mathcal{L}_\rho(\overline{\mathbf{x}},\overline{y},\overline{z})$, or equivalently,
\begin{equation}\label{eq:optimality_conditions}
\begin{aligned}
&0 \in \partial_L\left(\sum_{i=1}^{m}f_i -G\right)(\overline{\mathbf{x}}) +\nabla P(\overline{\mathbf{x}}) +\mathbf{A}^\top \overline{z},\\
&0 =\nabla H(\overline{y}) +B^\top \overline{z}, \\
&0 =\mathbf{A\overline{x}} +B\overline{y} -b.
\end{aligned}
\end{equation}

\begin{tcolorbox}[
	left=0pt,right=0pt,top=0pt,bottom=0pt,
	colback=blue!10!white, colframe=blue!50!white,
  	boxrule=0.2pt,
  	breakable]
\begin{algorithm}[Bregman proximal linearized ADMM (BPL-ADMM)]
\label{algo:BPLA}
\step{}\label{step:A2_1}
Set $n=0$. Choose $\mathbf{x}_0 =(x_{1,0}, \dots, x_{m,0}) \in \mathbb{R}^{d_1}\times \dots \times \mathbb{R}^{d_m}$, $y_0 \in \mathbb{R}^q$, and $z_0 \in \mathbb{R}^p$. Let $\phi_i: \mathbb{R}^{d_i} \to \mathbb{R}$, $i \in \{1,\dots,m\}$, be differentiable $\alpha$-strongly convex functions with $\alpha \in (0, +\infty)$, and let $\psi: \mathbb{R}^q \to \mathbb{R}$ be a differentiable convex function with $\ell_{\psi}$-Lipschitz continuous gradient. Let 
\begin{align*}
\mu \in \left(\frac{\ell_P+\beta}{\alpha}, +\infty\right),\;\; \nu \in \left[0, +\infty\right), \text{~~and~~} \rho \in \left(\frac{\ell_H +\sqrt{\ell_H^2 +8(\ell_H +2\nu\ell_{\psi})^2}}{2\lambda}, +\infty\right).    
\end{align*}

\step{}\label{step:A2_2}
Calculate $\nabla P(\mathbf{x}_n)=(\nabla_1 P(\mathbf{x}_n),\dots,\nabla_m P(\mathbf{x}_n))$ and $g_{n}=(g_{1,n},\dots,g_{m,n}) \in \partial_L G(\mathbf{x}_{n})$. Let $\mu_n \in [\mu, +\infty)$ and $\nu_n \in [0, \nu]$. For each $i \in \{1,\dots,m\}$, define $\mathbf{u}_{i,n+1}: \mathbb{R}^{d_i}\to \mathbb{R}^{d_1}\times \dots \times \mathbb{R}^{d_m}$ by $\mathbf{u}_{i,n+1}(x_i) =(x_{1,n+1}, \dots, x_{i-1,n+1}, x_i, x_{i+1,n}, \dots, x_{m,n})$, and find
\begin{align*}
x_{i,n+1} &\in  \argmin_{x_i \in \mathbb{R}^{d_i}} \left(f_i(x_i) +\langle \nabla_i P(\mathbf{x}_n) -g_{i,n}, x_i \rangle + \langle z_n, A_ix_i\rangle+\frac{\rho}{2}\|\mathbf{A}\mathbf{u}_{i,n+1}(x_i)+By_n-b\|^2 +\mu_n D_{\phi_i}(x_i,x_{i,n})\right),\\
y_{n+1} &\in  \argmin_{y \in \mathbb{R}^q} \left(H(y) + \langle z_n, By\rangle+\frac{\rho}{2}\|\mathbf{Ax}_{n+1} + By-b \|^2 +\nu_n D_{\psi}(y,y_n) \right),\\
z_{n+1} &=z_n+\rho(\mathbf{Ax}_{n+1}+By_{n+1}-b),
\end{align*}
where $\mathbf{x}_{n+1} =(x_{1,n+1}, \dots, x_{m,n+1})$.
\step{}
If a termination criterion is not met, set $n =n+1$ and go to Step~\ref{step:A2_2}.
\end{algorithm}
\end{tcolorbox}

\begin{remark}[Discussion of the algorithm structure] Some comments on Algorithm~\ref{algo:BPLA} are in order.
\begin{enumerate}
\item 
For each $n\in \mathbb{N}$ and $g_n\in \partial_L G(\mathbf{x}_n)$, we define the linearized augmented Lagrangian as
\begin{align*}
\mathcal{L}_{\rho,n}(\mathbf{x},y,z)=\sum_{i=1}^m f_i(x_i)+H(y)+\langle \nabla P(\mathbf{x}_n)-g_{n}, \mathbf{x} \rangle +  \langle z, \mathbf{Ax}+By-b \rangle +\frac{\rho}{2}\|\mathbf{Ax}+By-b\|^2.
\end{align*}
Then the updating scheme given in Step~\ref{step:A2_2} of Algorithm~\ref{algo:BPLA} can be written as
\begin{align*}
\begin{cases}
x_{i,n+1}\!\!\! &=\argmin\limits_{x_i \in \mathbb{R}^{d_i}}\left(\mathcal{L}_{\rho,n}(\mathbf{u}_{i,n+1}(x_i),y_n,z_n) + \mu_n D_{\phi_i}(x_i,x_{i,n})\right), \\
y_{n+1} &=\argmin\limits_{y\in \mathbb{R}^q} \left(\mathcal{L}_{\rho,n}(\mathbf{x}_{n+1},y,z_n) +\nu_n D_{\psi_i}(y,y_{n})\right),\\
z_{n+1} &=z_n+\rho(\mathbf{Ax}_{n+1}+By_{n+1}-b).
\end{cases}
\end{align*}

\item 
In the case that $P \equiv 0$, $m =1$, $x_1 =x \in \mathbb{R}^d$, $A_1 =A \in \mathbb{R}^{p\times d}$, and $\mu_n =\nu_n =1$ for all $n\in \mathbb{N}$, the updates in Step~\ref{step:A2_2} of Algorithm~\ref{algo:BPLA} becomes
    \begin{align*}
    \begin{cases}
        &x_{n+1} \in  \argmin_{x \in \mathbb{R}^{d}} \left(f(x) -\langle g_n, x \rangle + \langle z_n, Ax\rangle+\frac{\rho}{2}\|Ax+By_n-b\|^2 + D_{\phi}(x,x_n)\right),\\
        &y_{n+1} \in  \argmin_{y \in \mathbb{R}^q} \left(H(y) + \langle z_n, By\rangle+\frac{\rho}{2}\|Ax_{n+1} + By-b \|^2 + D_{\psi}(y,y_n) \right),\\
        &z_{n+1}=z_n+\rho(Ax_{n+1}+By_{n+1}-b).
    \end{cases}
    \end{align*}
    In turn, Algorithm~\ref{algo:BPLA} reduces to the one introduced in \cite{Sun2017}, in which $f$, $H$, $G$ are all convex, and both $\phi$ and $\psi$ are also assumed to be strongly convex functions and have Lipschitz continuous gradients.

\item It is also worth noting that since the problems considered in \cite{Yashtini2021,Yang2017,Lin2017,2blockBregADMM,XWang2023,Guo2016,Li_2015} are all special cases of \eqref{problem:WTY}, they can also be solved by our proposed algorithm.

\item 
The performance of the ADMM heavily relies on whether the $x_i-$updates and the $y-$update can be efficiently solved or not. The ideal situation is that those updates can be written into tractable forms, such as quadratic forms or closed-form proximal operators (see Section~\ref{sec:numerical_result} for two examples). In such cases, the ADMM can converge faster than some algorithms which need external solvers. For further discussion on the various closed-form proximal operators, we refer the readers to \cite[Remark 3.1]{Tan2023}. Incorporating Bregman distance into ADMM also has many benefits in terms of flexibility and performance, as shown in \cite{Li_2015,Sun2017,Tu2019,2blockBregADMM}, when appropriate choices of Bregman distance are used.

\end{enumerate}
\end{remark}

The following lemmas will be useful for our convergence analysis.

\begin{lemma}
\label{l:norm}
Let $L$ be a symmetric matrix in $\mathbb{R}^{d\times d}$ and let $M$ be a matrix in $\mathbb{R}^{p\times d}$. Then the following hold:
\begin{enumerate}
\item\label{l:norm_Mx}
For all $x\in \mathbb{R}^d$, $\sqrt{\lambda_{\min}(M^\top M)}\|x\| \leq \|Mx\|$.
\item\label{l:norm_M'x}
For all $x\in \mathbb{R}^d$, $\gamma(x^\top Lx) \leq \|Lx\|^2$, where $\gamma =\min\{|\lambda|: \lambda \text{~is a nonzero eigenvalue of~} L\}$. Consequently, for all $z\in \Ima(M)$, $\sqrt{\lambda_*(M^\top M)}\|z\| \leq \|M^\top z\|$, where $\lambda_*(M^\top M)$ is the smallest strictly positive eigenvalue of $M^\top M$. 
\end{enumerate}
\end{lemma}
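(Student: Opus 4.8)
The plan is to reduce both parts to the spectral theorem for symmetric matrices, diagonalizing the relevant quadratic forms in an orthonormal eigenbasis and then comparing the resulting sums coefficient by coefficient.

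For part \ref{l:norm_Mx}, I would first observe that $\|Mx\|^2 = x^\top M^\top M x$ and that $M^\top M$ is symmetric positive semidefinite, so all its eigenvalues are nonnegative and in particular $\lambda_{\min}(M^\top M)\geq 0$. Expanding $x$ in an orthonormal eigenbasis of $M^\top M$ (equivalently, invoking the Rayleigh-quotient lower bound $x^\top S x \geq \lambda_{\min}(S)\|x\|^2$ valid for any symmetric $S$) yields $\|Mx\|^2 \geq \lambda_{\min}(M^\top M)\|x\|^2$, and taking square roots gives the claim. This part is essentially immediate.

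For the first inequality in part \ref{l:norm_M'x}, I would diagonalize $L$ via the spectral theorem as $L=\sum_j \lambda_j u_j u_j^\top$ with $(u_j)$ orthonormal, and write $x=\sum_j c_j u_j$ so that $x^\top L x = \sum_j \lambda_j c_j^2$ and $\|Lx\|^2 = \sum_j \lambda_j^2 c_j^2$. The target $\gamma(x^\top L x)\leq \|Lx\|^2$ then follows if I can establish the termwise bound $\gamma \lambda_j c_j^2 \leq \lambda_j^2 c_j^2$. The one subtlety is the sign of the eigenvalues: when $\lambda_j=0$ both sides vanish; when $\lambda_j\neq 0$ the definition of $\gamma$ gives $\gamma\leq|\lambda_j|$, hence $\gamma\lambda_j\leq\gamma|\lambda_j|\leq\lambda_j^2$, which also covers negative eigenvalues automatically since then the left-hand side is nonpositive while the right-hand side is nonnegative. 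Summing over $j$ produces the inequality, and I expect this termwise sign bookkeeping to be the only place demanding care.

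Finally, for the ``consequently'' statement I would apply the same eigen-expansion to the symmetric positive semidefinite matrix $L=MM^\top$, relying on two standard facts: $MM^\top$ and $M^\top M$ share the same nonzero eigenvalues, so the smallest strictly positive eigenvalue of $MM^\top$ equals $\lambda_*(M^\top M)$; and $\Ima(M)=\Ima(MM^\top)=(\ker MM^\top)^\perp$. For $z\in\Ima(M)$ the eigen-expansion of $z$ therefore involves only eigenvectors with strictly positive eigenvalues, so $\|M^\top z\|^2 = z^\top MM^\top z = \sum_{j:\lambda_j>0}\lambda_j\langle z,u_j\rangle^2 \geq \lambda_*(M^\top M)\|z\|^2$, and taking square roots concludes the proof. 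The main obstacle here, modest as it is, lies in justifying that $z$ has no component in $\ker MM^\top$ and in transferring the eigenvalue $\lambda_*$ between $MM^\top$ and $M^\top M$; the rest is routine.
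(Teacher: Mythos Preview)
Your argument is correct throughout. For part~\ref{l:norm_Mx} and the first inequality of part~\ref{l:norm_M'x}, your spectral-theorem computation is essentially what the paper does (the paper cites \cite[Lemma~4.5]{BDL23} for the latter, but the underlying argument is the same eigendecomposition with the termwise bound $\gamma\lambda_j\leq\lambda_j^2$).

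For the ``consequently'' part, however, you take a genuinely different route. The paper applies the first assertion of \ref{l:norm_M'x} not to $MM^\top$ but to $L=M^\top M$, using a preimage $x$ with $z=Mx$: since $M^\top M$ is positive semidefinite, its $\gamma$ equals $\lambda_*(M^\top M)$, and the inequality $\gamma(x^\top Lx)\leq\|Lx\|^2$ reads directly as $\lambda_*(M^\top M)\|z\|^2\leq\|M^\top z\|^2$. This makes the word ``consequently'' literal and avoids any auxiliary facts. Your approach instead works with $MM^\top$ and relies on two extra ingredients---that $MM^\top$ and $M^\top M$ share nonzero eigenvalues, and that $\Ima(M)=(\ker MM^\top)^\perp$---to argue that $z$ has no kernel component and then bound the Rayleigh quotient from below. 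Both are valid; the paper's choice of $L=M^\top M$ is a bit slicker because it reuses the first assertion verbatim, whereas your argument is more self-contained but carries a slightly heavier bookkeeping load.
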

\begin{proof}
\ref{l:norm_Mx}: This follows from the fact that $\|Mx\|^2 =x^\top (M^\top M)x$.

\ref{l:norm_M'x}: 
The first assertion follows from \cite[Lemma~4.5]{BDL23} and its proof. Now, let $z\in \Ima(M)$. Then there exists $x\in \mathbb{R}^d$ such that $z =Mx$. Since $M^\top M$ is a symmetric matrix in $\mathbb{R}^{d\times d}$ with all eigenvalues being nonnegative, we have from the first assertion that
\begin{align*}
\lambda_*(M^\top M)\|z\|^2 =\lambda_*(M^\top M)(x^\top M^\top Mx) \leq \|M^\top Mx\|^2 =\|M^\top z\|^2,    
\end{align*}
which completes the proof.
\end{proof}

We move on to prove the main results of the paper, starting with the guaranteed subsequential convergence. 

\begin{theorem}[Subsequential convergence]
\label{t:cvg}
Suppose that Assumption~\ref{a:standing} holds. Let $(\mathbf{x}_n,y_n,z_n)_{n\in \mathbb{N}}$ be the sequence generated by Algorithm~\ref{algo:BPLA} and, for each $n\in \mathbb{N}^*$, define
\begin{align*}
L_n =\mathcal{L}_\rho(\mathbf{x}_{n},y_{n},z_{n}) +c\|y_n-y_{n-1}\|^2, \text{~where~} c=\frac{(\ell_H+2\nu\ell_{\psi})\nu\ell_{\psi}}{\lambda\rho}.     
\end{align*}
Then the following hold:
\begin{enumerate}
\item\label{t:cvg_decrease}
For all $n\in \mathbb{N}^*$,
\begin{align}\label{eq:decrease}
L_{n+1} +\delta_{\mathbf{x}}\|\mathbf{x}_{n+1}-\mathbf{x}_{n}\|^2 +\delta_y\|y_{n+1}-y_n\|^2 \leq L_n,
\end{align}
where $\delta_{\mathbf{x}} =\frac{\mu\alpha -\ell_P -\beta}{2} > 0$ and $\delta_y =\frac{\lambda\rho}{2} -\frac{(\ell_H+2\nu\ell_{\psi})^2}{\lambda\rho} -\frac{\ell_H}{2} >0$.

\item\label{t:cvg_seq}
Suppose that $z_0\in \Ima(B)$, that $\sum_{i=1}^m f_i(x_i)+P(\mathbf{x})-G(\mathbf{x})$ is coercive, and that $H$ is bounded below. 
Then the sequence $(\mathbf{x}_n,{y}_n,{z}_n)_{n\in \mathbb{N}}$ is bounded and $\mathbf{x}_{n+1}-\mathbf{x}_n\to 0$, $y_{n+1}-y_n\to 0$, $z_{n+1}-z_n\to 0$, and $\mathbf{Ax}_n +By_n \to b$ as $n\to +\infty$. Furthermore, for every cluster point $(\overline{\mathbf{x}},\overline{y},\overline{z})$ of $(\mathbf{x}_n,{y}_n,{z}_n)_{n\in \mathbb{N}}$, it holds that
\begin{align*}
\mathbf{A}\overline{\mathbf{x}} +b\overline{y} =b \text{~~and~~} \lim_{n \to +\infty} F(\mathbf{x}_{n },y_{n}) =\lim_{n \to +\infty}\mathcal{L}_\rho(\mathbf{x}_{n },y_{n},z_{n}) =\mathcal{L}_\rho(\overline{\mathbf{x}},\overline{y},\overline{z}) =F(\overline{\mathbf{x}},\overline{y}),    
\end{align*}
and that if $G$ is strictly differentiable and, for each $i \in \{1,\dots,m\}$, $\nabla \phi_i$ is $\ell_{\phi}$-Lipschitz continuous, then $(\overline{\mathbf{x}},\overline{y},\overline{z})$ is a stationary point of $\mathcal{L}_\rho$.
\end{enumerate}
\end{theorem}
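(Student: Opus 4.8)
The plan is to establish \ref{t:cvg_decrease} first as a one-step sufficient-decrease estimate for $L_n$, and then bootstrap \ref{t:cvg_seq} from it together with coercivity. For \ref{t:cvg_decrease} I would split the increment $\mathcal{L}_\rho(\mathbf{x}_{n+1},y_{n+1},z_{n+1})-\mathcal{L}_\rho(\mathbf{x}_n,y_n,z_n)$ across the three updates of Step~\ref{step:A2_2}. In the $\mathbf{x}$-block, for each $i$ the minimality of $x_{i,n+1}$ together with $D_{\phi_i}(x_{i,n+1},x_{i,n})\geq\frac{\alpha}{2}\|x_{i,n+1}-x_{i,n}\|^2$ (Proposition~\ref{p:Bregman}\ref{p:Bregman_3}) gives a per-block decrease; summing over $i$ the penalty terms telescope through the intermediate points $\mathbf{u}_{i,n+1}(\cdot)$, leaving $\frac{\rho}{2}(\|\mathbf{Ax}_{n+1}+By_n-b\|^2-\|\mathbf{Ax}_n+By_n-b\|^2)$, and converting the linearizations of $P$ and $G$ back via the descent inequality (Lemma~\ref{l:diff}\ref{l:diff_descent}) and the $\beta$-weak-convexity subgradient inequality yields $\mathcal{L}_\rho(\mathbf{x}_{n+1},y_n,z_n)-\mathcal{L}_\rho(\mathbf{x}_n,y_n,z_n)\leq\frac{\ell_P+\beta-\mu_n\alpha}{2}\|\mathbf{x}_{n+1}-\mathbf{x}_n\|^2\leq-\delta_{\mathbf{x}}\|\mathbf{x}_{n+1}-\mathbf{x}_n\|^2$ (here $\mu_n\geq\mu>\frac{\ell_P+\beta}{\alpha}$ forces $\delta_{\mathbf{x}}>0$). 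In the $y$-block the strong convexity comes from the penalty Hessian $\rho B^\top B\succeq\rho\lambda\,\mathbf{I}$ (Assumption~\ref{a:standing}\ref{a:standing_rank}); combining $\rho\lambda$-strong convexity of its convex part with the descent lemma for $H$ gives $\mathcal{L}_\rho(\mathbf{x}_{n+1},y_{n+1},z_n)-\mathcal{L}_\rho(\mathbf{x}_{n+1},y_n,z_n)\leq(\frac{\ell_H}{2}-\frac{\rho\lambda}{2})\|y_{n+1}-y_n\|^2$.

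The delicate third piece is $\frac{1}{\rho}\|z_{n+1}-z_n\|^2$ from the $z$-update, which must be absorbed by $y$-increments. Reading off $y$-optimality and substituting the $z$-update produces the dual identity $B^\top z_{n+1}=-\nabla H(y_{n+1})-\nu_n(\nabla\psi(y_{n+1})-\nabla\psi(y_n))$; subtracting the analogous identity at index $n$, using $z_{n+1}-z_n\in\Ima(B)$ (Assumption~\ref{a:standing}\ref{a:standing_image}) with Lemma~\ref{l:norm}\ref{l:norm_M'x} to convert $\|B^\top(z_{n+1}-z_n)\|$ into $\sqrt{\lambda}\|z_{n+1}-z_n\|$, and bounding gradient differences by $\ell_H,\ell_\psi$ and $\nu_n\leq\nu$, I obtain $\sqrt{\lambda}\|z_{n+1}-z_n\|\leq(\ell_H+\nu\ell_\psi)\|y_{n+1}-y_n\|+\nu\ell_\psi\|y_n-y_{n-1}\|$. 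A weighted Cauchy--Schwarz step with weights summing to $\ell_H+2\nu\ell_\psi$ splits $\|z_{n+1}-z_n\|^2$ into a $\|y_{n+1}-y_n\|^2$ term and a $\|y_n-y_{n-1}\|^2$ term of coefficient exactly $c$. Adding the three pieces and the bookkeeping difference $c\|y_{n+1}-y_n\|^2-c\|y_n-y_{n-1}\|^2$, the stale terms cancel and the $\|y_{n+1}-y_n\|^2$ coefficient collapses to $-\delta_y$, giving \eqref{eq:decrease}; the prescribed lower bound on $\rho$ is exactly the root condition making $\delta_y>0$.

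For \ref{t:cvg_seq} I would first bound $L_n$ below. Completing the square gives $\langle z_n,r_n\rangle+\frac{\rho}{2}\|r_n\|^2\geq-\frac{1}{2\rho}\|z_n\|^2$ with $r_n=\mathbf{Ax}_n+By_n-b$; since $z_0\in\Ima(B)$ and each increment lies in $\Ima(B)$, induction gives $z_n\in\Ima(B)$, so Lemma~\ref{l:norm}\ref{l:norm_M'x} with the dual identity bounds $\|z_n\|^2$ by $\frac{2}{\lambda}(\|\nabla H(y_n)\|^2+\nu^2\ell_\psi^2\|y_n-y_{n-1}\|^2)$. Because the range of $\rho$ forces $\lambda\rho>2\ell_H$, Lemma~\ref{l:diff}\ref{l:diff_bounded} makes $H(y_n)-\frac{1}{\lambda\rho}\|\nabla H(y_n)\|^2$ bounded below and the residual coefficient is absorbed by $c$; hence $L_n\geq\sum_i f_i(x_{i,n})+P(\mathbf{x}_n)-G(\mathbf{x}_n)+\mathrm{const}$. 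Monotonicity from \ref{t:cvg_decrease} then keeps $\sum_i f_i(x_{i,n})+P(\mathbf{x}_n)-G(\mathbf{x}_n)$ bounded above, so coercivity bounds $(\mathbf{x}_n)$ and $L_n$ converges; summing \eqref{eq:decrease} forces $\mathbf{x}_{n+1}-\mathbf{x}_n\to0$ and $y_{n+1}-y_n\to0$, the $z$-estimate then gives $z_{n+1}-z_n\to0$, whence $r_n=\frac{1}{\rho}(z_n-z_{n-1})\to0$ and $\mathbf{Ax}_n+By_n\to b$. Solving $By_n=b-\mathbf{Ax}_n+\frac{1}{\rho}(z_n-z_{n-1})$ with full column rank of $B$ (Lemma~\ref{l:norm}\ref{l:norm_Mx}) bounds $(y_n)$, and the dual identity then bounds $(z_n)$. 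Passing to a subsequence $(\mathbf{x}_{n_k},y_{n_k},z_{n_k})\to(\overline{\mathbf{x}},\overline y,\overline z)$, feasibility is immediate; since $\mathcal{L}_\rho-F=\langle z_n,r_n\rangle+\frac{\rho}{2}\|r_n\|^2\to0$ and $L_n\to L_*$, both $\mathcal{L}_\rho$ and $F$ along the iterates converge to $L_*$, and testing each $x_i$-subproblem at the competitor $\overline x_i$ (the Bregman terms vanishing in the limit) gives $\limsup_k f_i(x_{i,n_k})\leq f_i(\overline x_i)$, which with lower semicontinuity yields $f_i(x_{i,n_k})\to f_i(\overline x_i)$ and hence $L_*=F(\overline{\mathbf{x}},\overline y)=\mathcal{L}_\rho(\overline{\mathbf{x}},\overline y,\overline z)$. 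Stationarity then splits as: condition three is feasibility; condition two is the limit of the dual identity using $\nu_n\leq\nu$ and $y_{n_k-1}\to\overline y$; and condition one follows by reading $x_i$-optimality as $\xi_{i,n}\in\partial_L f_i(x_{i,n})$ and passing to the limit through closedness of $\partial_L f_i$ (legitimate since $f_i$-values converge and, via Lemma~\ref{l:upsemicont}, $g_n\to\nabla G(\overline{\mathbf{x}})$), assembled with the separable-sum and strict-differentiability rules of Lemma~\ref{l:cal} to recover \eqref{eq:optimality_conditions}.

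The step I expect to be the main obstacle is ensuring the proximal perturbation $\mu_n(\nabla\phi_i(x_{i,n+1})-\nabla\phi_i(x_{i,n}))$ in the $x_i$-optimality condition vanishes along the subsequence. The descent estimate controls only $\mu_n\|\mathbf{x}_{n+1}-\mathbf{x}_n\|^2$, not the first power $\mu_n\|\mathbf{x}_{n+1}-\mathbf{x}_n\|$ that the $\ell_\phi$-Lipschitz bound produces; since $\mu_n$ ranges over $[\mu,+\infty)$, making this term negligible requires either boundedness of $(\mu_n)$ or a subsequence argument, whereas the analogous $\nu_n$-term is harmless because $\nu_n\leq\nu$. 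A secondary hazard is the careful telescoping in the $\mathbf{x}$-block and the weighted Cauchy--Schwarz split, which must reproduce precisely the constants $\delta_y$ and $c$ for the cancellation in \eqref{eq:decrease} to occur.
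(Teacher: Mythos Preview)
Your proposal is correct and tracks the paper's proof closely in the decrease estimate, the $z$-control via the dual identity and weighted Cauchy--Schwarz, and the lower-bound/coercivity bootstrap; the one substantive methodological difference is in the stationarity step. You plan to pass to the limit in the first-order inclusion $\xi_{i,n}\in\partial_L f_i(x_{i,n+1})$ using graph closedness of $\partial_L f_i$, whereas the paper instead passes to the limit in the \emph{minimization inequality} \eqref{eq:update_step_x} for a general competitor $x_i$, thereby showing that $\overline{x}_i$ minimizes an auxiliary function $h_i$, and then reads off $0\in\partial_L h_i(\overline{x}_i)$ via the sum rule. Both routes are valid; the paper's has the mild advantage of not needing $\widehat\partial f_i(x_{i,n+1})\neq\varnothing$ or an explicit subgradient sequence. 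Your flagged obstacle about the $\mu_n(\nabla\phi_i(x_{i,n+1})-\nabla\phi_i(x_{i,n}))$ perturbation is genuine and, notably, does not disappear in the paper's route either: the right-hand Bregman term $\mu_{k_n}D_{\phi_i}(x_i,x_{i,k_n})\leq\frac{\mu_{k_n}\ell_\phi}{2}\|x_i-x_{i,k_n}\|^2$ must still be controlled in the limit for general $x_i$, which needs $(\mu_{k_n})$ bounded. The paper's printed proof writes $\nu$ in place of the $\mu$-dependent constant at that step, which looks like a slip; Theorem~\ref{t:full} explicitly adds $\limsup_n\mu_n<+\infty$, which is exactly the fix you anticipated.
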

\begin{proof}
\ref{t:cvg_decrease}: Let $n\in \mathbb{N}$. The update of $z_{n+1}$ yields
\begin{align}\label{eq:z-update}
z_{n+1} -z_n =\rho(\mathbf{Ax}_{n+1}+By_{n+1}-b).   
\end{align}
By the optimality condition for the update of $y_{n+1}$ in Step~\ref{step:A2_2} of Algorithm~\ref{algo:BPLA}, 
\begin{align*}
0 &= \nabla H(y_{n+1}) + B^\top z_n +\rho B^\top(\mathbf{Ax}_{n+1} +By_{n+1} -b) +\nu_n(\nabla \psi(y_{n+1}) - \nabla \psi(y_n)),
\end{align*}
which combined with \eqref{eq:z-update} yields
\begin{align}\label{eq:B'z}
B^\top z_{n+1} =-\nabla H(y_{n+1}) -\nu_n(\nabla \psi(y_{n+1}) -\nabla \psi(y_n)).  
\end{align}
From the definition of the augmented Lagrangian, we have that
\begin{align}\label{eq:zz}
\mathcal{L}_\rho(\mathbf{x}_{n+1},y_{n+1},z_{n+1}) -\mathcal{L}_\rho(\mathbf{x}_{n+1},y_{n+1},z_{n}) &= \langle z_{n+1}-z_n, \mathbf{Ax}_{n+1}+By_{n+1}-b \rangle \notag\\
&= \frac{1}{\rho}\|z_{n+1}-z_n\|^2 
\end{align}
and that
\begin{align*}
\mathcal{L}_\rho(\mathbf{x}_{n+1},y_{n+1},z_n) -\mathcal{L}_\rho(\mathbf{x}_{n+1},y_n,z_n) &= H(y_{n+1}) - H(y_n) + \langle z_n, B(y_{n+1}-y_n) \rangle \\
&\qquad + \frac{\rho}{2}(\|\mathbf{Ax}_{n+1}+By_{n+1}-b\|^2 - \|\mathbf{Ax}_{n+1}+By_{n}-b\|^2).  
\end{align*}
Writing $\mathbf{Ax}_{n+1}+By_{n}-b =(\mathbf{Ax}_{n+1}+By_{n+1}-b) -(By_{n+1}- By_n)$, we obtain that
\begin{align*}
&\|\mathbf{Ax}_{n+1}+By_{n+1}-b\|^2 - \|\mathbf{Ax}_{n+1}+By_{n}-b\|^2 \\
&= -\|B(y_{n+1}-y_n)\|^2 +2\langle  \mathbf{Ax}_{n+1}+By_{n+1}-b, B(y_{n+1}-y_n)\rangle \\
&= -\|B(y_{n+1}-y_n)\|^2 +\frac{2}{\rho}\langle  z_{n+1}-z_{n}, B(y_{n+1}-y_n)\rangle,
\end{align*}
and so
\begin{align*}
\mathcal{L}_\rho(\mathbf{x}_{n+1},y_{n+1},z_n) -\mathcal{L}_\rho(\mathbf{x}_{n+1},y_n,z_n) = H(y_{n+1}) - H(y_n) +\langle z_{n+1}, B(y_{n+1}-y_n) \rangle -\frac{\rho}{2}\|B(y_{n+1}-y_n)\|^2.
\end{align*}
In view of \eqref{eq:B'z},
\begin{align*}
\langle z_{n+1}, B(y_{n+1}-y_n) \rangle &=\langle B^\top z_{n+1}, y_{n+1}-y_n \rangle \\
&=-\langle \nabla H(y_{n+1}), y_{n+1}-y_n \rangle -\nu_n\langle \nabla \psi(y_{n+1})- \nabla \psi(y_n), y_{n+1}-y_n \rangle \\
&\leq -H(y_{n+1}) +H(y_n) +\frac{\ell_H}{2}\|y_{n+1}-y_n\|^2,
\end{align*}
where the last inequality is from the $\ell_H$-Lipschitz continuity of $\nabla H$ and Lemma~\ref{l:diff}\ref{l:diff_descent}, and from the monotonicity of $\nabla \psi$. 
We therefore obtain that
\begin{align}\label{eq:yy}
\mathcal{L}_\rho(\mathbf{x}_{n+1},y_{n+1},z_n) -\mathcal{L}_\rho(\mathbf{x}_{n+1},y_n,z_n) \leq \frac{\ell_H}{2}\|y_{n+1}-y_n\|^2 -\frac{\rho}{2}\|B(y_{n+1}-y_n)\|^2.    
\end{align}
Next, by the update of $x_{i,n+1}$ in Step~\ref{step:A2_2} of Algorithm~\ref{algo:BPLA}, for all $i \in \{1,\dots,m\}$ and all $x_i \in \mathbb{R}^{d_i}$,
\begin{align}\label{eq:update_step_x}
&f_i(x_{i,n+1}) +\langle \nabla_i P(\mathbf{x}_n) -g_{i,n}, x_{i,n+1} \rangle +\langle z_n, A_{i}x_{i,n+1} \rangle +\frac{\rho}{2}\|\mathbf{A}\mathbf{u}_{i,n+1}(x_{i,n+1}) +By_n -b\|^2 +\mu_n D_{\phi_i}(x_{i,n+1},x_{i,n}) \notag \\
&\leq  f_i(x_i) +\langle \nabla_i P(\mathbf{x}_n) -g_{i,n}, x_i \rangle +\langle z_n, A_{i}x_i\rangle +\frac{\rho}{2}\|\mathbf{A}\mathbf{u}_{i,n+1}(x_i) +By_n -b\|^2 +\mu_n D_{\phi_i}(x_i,x_{i,n}).
\end{align}
Letting $x_i =x_{i,n}$, using Proposition~\ref{p:Bregman}\ref{p:Bregman_1}\&\ref{p:Bregman_3}, and summing up over $i \in \{1,\dots,m\}$, we derive that
\begin{align}\label{eq:from x-update}
&\sum_{i=1}^m f_i(x_{i,n+1}) +\langle \nabla P(\mathbf{x}_n) -g_n, \mathbf{x}_{n+1} \rangle +\langle z_n, \mathbf{Ax}_{n+1} \rangle +\frac{\rho}{2}\|\mathbf{A}\mathbf{x}_{n+1} +By_n -b\|^2 +\frac{\mu\alpha}{2}\|\mathbf{x}_{n+1}-\mathbf{x}_{n}\|^2 \notag\\
&\leq \sum_{i=1}^m f_i(x_{i,n}) +\langle \nabla P(\mathbf{x}_n) -g_n, \mathbf{x}_{n} \rangle +\langle z_n, \mathbf{Ax}_{n} \rangle +\frac{\rho}{2}\|\mathbf{A}\mathbf{x}_n +By_n -b\|^2.
\end{align}
On the other hand,
\begin{align}\label{eq:xx}
&\mathcal{L}_\rho(\mathbf{x}_{n+1},y_{n},z_n) -\mathcal{L}_\rho(\mathbf{x}_{n},y_n,z_n) \notag\\
&= \sum_{i=1}^m f_i(x_{i,n+1}) -\sum_{i=1}^m f_i(x_{i,n}) +P(\mathbf{x}_{n+1}) -P(\mathbf{x}_{n}) -G(\mathbf{x}_{n+1}) +G(\mathbf{x}_{n}) \notag\\ 
&\quad +\langle z_n, \mathbf{Ax}_{n+1}-\mathbf{Ax}_{n}\rangle +\frac{\rho}{2}\|\mathbf{Ax}_{n+1}+By_{n}-b\|^2 -\frac{\rho}{2}\|\mathbf{Ax}_{n}+By_{n}-b\|^2 \notag\\
&\leq \sum_{i=1}^m f_i(x_{i,n+1}) -\sum_{i=1}^m f_i(x_{i,n}) +\langle \nabla P(\mathbf{x}_n), \mathbf{x}_{n+1}-\mathbf{x}_n \rangle + \frac{\ell_P}{2}\|\mathbf{x}_{n+1}-\mathbf{x}_n\|^2 - \langle g_n, \mathbf{x}_{n+1} -\mathbf{x}_{n} \rangle + \frac{\beta}{2}\|\mathbf{x}_{n+1}-\mathbf{x}_{n}\|^2 \notag\\
&\quad +\langle z_n, \mathbf{Ax}_{n+1}-\mathbf{Ax}_{n}\rangle  + \frac{\rho}{2}\|\mathbf{Ax}_{n+1}+By_{n}-b\|^2 - \frac{\rho}{2}\|\mathbf{Ax}_{n}+By_{n}-b\|^2 \notag\\
&\leq -\frac{\mu\alpha -\ell_P -\beta}{2}\|\mathbf{x}_{n+1}-\mathbf{x}_{n}\|^2 =-\delta_{\mathbf{x}}\|\mathbf{x}_{n+1}-\mathbf{x}_{n}\|^2,
\end{align}
where the first inequality is obtained by using Lemma~\ref{l:diff}\ref{l:diff_descent} on the $\ell_P$-Lipschitz continuity of $\nabla P$ and \cite[Lemma~4.1]{BDL22} on the $\beta$-weak convexity of $G$, while the last inequality follows from \eqref{eq:from x-update}. Summing up three relations \eqref{eq:zz}, \eqref{eq:yy}, and \eqref{eq:xx} yields
\begin{align}\label{eq:LL}
&\mathcal{L}_\rho(\mathbf{x}_{n+1},y_{n+1},z_{n+1}) - \mathcal{L}_\rho(\mathbf{x}_{n},y_{n},z_{n}) \notag\\
&\leq -\delta_{\mathbf{x}}\|\mathbf{x}_{n+1}-\mathbf{x}_{n}\|^2 +\frac{\ell_H}{2}\|y_{n+1}-y_n\|^2 -\frac{\rho}{2}\|B(y_{n+1}-y_n)\|^2 +\frac{1}{\rho}\|z_{n+1}-z_n\|^2.
\end{align}

Now, since $\Ima(\mathbf{A}) \cup \{b\} \subseteq \Ima(B)$, it follows from \eqref{eq:z-update} that
\begin{align}\label{eq:zImB}
z_{n+1}-z_n \in \Ima(B)    
\end{align}
which, by Lemma~\ref{l:norm}\ref{l:norm_M'x}, yields 
\begin{align*}
\|z_{n+1} -z_n\| \leq \frac{1}{\sqrt{\lambda}}\|B^\top z_{n+1} -B^\top z_n\|.
\end{align*}
Let $n\in \mathbb{N}^*$. We derive from \eqref{eq:B'z} and the Lipschitz continuity of $\nabla H$ and $\nabla \psi$ that
\begin{align}\label{eq:BTZy}
\|B^\top z_{n+1} -B^\top z_n\| &\leq \|\nabla H(y_{n+1}) -\nabla H(y_n)\| +\nu_n\|\nabla \psi(y_{n+1}) -\nabla \psi(y_n)\| +\nu_{n-1}\|\nabla \psi(y_n) - \nabla \psi(y_{n-1})\| \notag \\ 
&\leq (\ell_H +\nu\ell_{\psi})\|y_{n+1} -y_n\| +\nu\ell_{\psi}\|y_n -y_{n-1}\|,
\end{align}
and thus, 
\begin{align}\label{eq:zy}
\|z_{n+1} -z_n\| \leq \frac{1}{\sqrt{\lambda}}\|B^\top z_{n+1} -B^\top z_n\| \leq \frac{1}{\sqrt{\lambda}}((\ell_H +\nu\ell_{\psi})\|y_{n+1} -y_n\| +\nu\ell_{\psi}\|y_n -y_{n-1}\|).
\end{align}
Applying Cauchy--Schwarz inequality to two vectors $u =(\sqrt{\ell_H+\nu\ell_{\psi}},\sqrt{\nu\ell_{\psi}})$ and $v =(\sqrt{\ell_H+\nu\ell_{\psi}}\|y_{n+1} -y_n\|,\sqrt{\nu\ell_{\psi}}\|y_n -y_{n-1}\|)$, we obtain that
\begin{align*}
\|z_{n+1} -z_n\|^2 \leq \frac{\ell_H +2\nu\ell_{\psi}}{\lambda}((\ell_H +\nu\ell_{\psi})\|y_{n+1} -y_n\|^2 +\nu\ell_{\psi}\|y_n -y_{n-1}\|^2).    
\end{align*}
By combining this with \eqref{eq:LL} and using Lemma~\ref{l:norm}\ref{l:norm_Mx},
\begin{align*}
&\mathcal{L}_\rho(\mathbf{x}_{n+1},y_{n+1},z_{n+1}) - \mathcal{L}_\rho(\mathbf{x}_{n},y_{n},z_{n}) \notag\\
&\leq -\delta_{\mathbf{x}}\|\mathbf{x}_{n+1}-\mathbf{x}_{n}\|^2 +\left(\frac{(\ell_H+2\nu\ell_{\psi})(\ell_H+\nu\ell_{\psi})}{\lambda\rho} +\frac{\ell_H}{2}\right)\|y_{n+1}-y_n\|^2 \\ 
&\qquad -\frac{\rho}{2}\|B(y_{n+1}-y_n)\|^2 +\frac{(\ell_H+2\nu\ell_{\psi})\nu\ell_{\psi}}{\lambda\rho}\|y_n-y_{n-1}\|^2 \\
&\leq -\delta_{\mathbf{x}}\|\mathbf{x}_{n+1}-\mathbf{x}_{n}\|^2 -\left(\frac{\lambda\rho}{2} -\frac{(\ell_H+2\nu\ell_{\psi})(\ell_H+\nu\ell_{\psi})}{\lambda\rho} -\frac{\ell_H}{2}\right)\|y_{n+1}-y_n\|^2 \\
&\qquad +\frac{(\ell_H+2\nu\ell_{\psi})\nu\ell_{\psi}}{\lambda\rho}\|y_n-y_{n-1}\|^2,
\end{align*}
or equivalently,
\begin{align*}
&\mathcal{L}_\rho(\mathbf{x}_{n+1},y_{n+1},z_{n+1}) +\frac{(\ell_H +2\nu\ell_{\psi})\nu\ell_{\psi}}{\lambda\rho}\|y_{n+1}-y_{n}\|^2 + \delta_{\mathbf{x}}\|\mathbf{x}_{n+1}-\mathbf{x}_{n}\|^2 +\delta_y\|y_{n+1}-y_n\|^2 \\
&\leq \mathcal{L}_\rho(\mathbf{x}_{n},y_{n},z_{n}) +\frac{(\ell_H +2\nu\ell_{\psi})\nu\ell_{\psi}}{\lambda\rho}\|y_n-y_{n-1}\|^2,
\end{align*}
which proves \eqref{eq:decrease}. Here, we note that $\delta_{\mathbf{x}} =\frac{\mu\alpha -\ell_P -\beta}{2} > 0$ since $\mu > \frac{\ell_P+\beta}{\alpha}$ and that $\delta_y =\frac{\lambda\rho}{2} -\frac{(\ell_H+2\nu\ell_{\psi})^2}{\lambda\rho} -\frac{\ell_H}{2} >0$ since $\rho > \frac{\ell_H +\sqrt{\ell_H^2 +8(\ell_H +2\nu\ell_{\psi})^2}}{2\lambda}$.

\ref{t:cvg_seq}: Let $n\in \mathbb{N}^*$. As $z_0 \in \Ima(B)$, we have from \eqref{eq:zImB} that $z_n \in \Ima (B)$. Using Lemma~\ref{l:norm}\ref{l:norm_M'x}, \eqref{eq:B'z}, and the Lipschitz continuity of $\psi$ yields
\begin{align*}
\sqrt{\lambda}\|z_n\|\leq \|B^\top z_n\| \leq\|\nabla H(y_n)\|+ \nu_n\|\nabla \psi(y_{n}) -\nabla \psi(y_{n-1})\| 
\leq \|\nabla H(y_n)\|+ \nu \ell_{\psi}\|y_{n}-y_{n-1}\|.
\end{align*}
For $\omega\in \mathbb{R}_{++}$ to be chosen later, invoking Cauchy--Schwarz inequality with two vectors $u =(\sqrt{\omega},\sqrt{\nu\ell_{\psi}})$ and $v =(\frac{1}{\sqrt{\omega}}\|\nabla H(y_n)\|,\sqrt{\nu\ell_{\psi}}\|y_n-y_{n-1}\|)$, we have that
\begin{align}\label{eq:uppper_zn}
\|z_n\|^2 &\leq \frac{\omega +\nu\ell_{\psi}}{\lambda} \left(\frac{1}{\omega}\|\nabla H(y_n)\|^2 +\nu\ell_{\psi}\|y_n-y_{n-1}\|^2\right) \notag \\
&= \frac{1}{\lambda}\left(1 +\frac{\nu\ell_{\psi}}{\omega}\right)\|\nabla H(y_n)\|^2 +\frac{(\omega +\nu\ell_{\psi})\nu\ell_{\psi}}{\lambda}\|y_n-y_{n-1}\|^2.
\end{align}
Combining this with the fact that
\begin{align*}
\langle z_n, \mathbf{Ax}_n+By_n-b \rangle +\frac{\rho}{2}\|\mathbf{Ax}_n+By_n-b\|^2 =\frac{\rho}{2}\left\| \frac{z_n}{\rho} +\mathbf{Ax}_n+By_n-b \right\|^2 -\frac{1}{2\rho}\|z_n\|^2,
\end{align*}
we deduce that
\begin{align*}
L_n &=\mathcal{L}_\rho(\mathbf{x}_n,y_n,z_n) + c\|y_n-y_{n-1}\|^2 \\
&\geq \sum_{i=1}^m f_i(x_{i,n}) +H(y_n) +P(\mathbf{x}_n) -G(\mathbf{x}_n) \\ 
&\quad +\frac{\rho}{2}\left\| \frac{z_n}{\rho} + \mathbf{Ax}_n+By_n-b \right\|^2 -\frac{1}{2\lambda\rho}\left(1 +\frac{\nu\ell_{\psi}}{\omega}\right)\|\nabla H(y_n)\|^2 +\left(c -\frac{(\omega +\nu\ell_{\psi})\nu\ell_{\psi}}{2\lambda\rho}\right)\|y_n-y_{n-1}\|^2 \\
&= \left(\sum_{i=1}^m f_i(x_{i,n}) +P(\mathbf{x}_n) -G(\mathbf{x}_n)\right) +\left(H(y_n) -\frac{1}{2\ell_H}\|\nabla H(y_n)\|^2\right) +\frac{\rho}{2}\left\| \frac{z_n}{\rho} + \mathbf{Ax}_n+By_n-b \right\|^2 \\ 
&\quad +\left(\frac{1}{2\ell_H} -\frac{1}{2\lambda\rho}\left(1 +\frac{\nu\ell_{\psi}}{\omega}\right)\right)\|\nabla H(y_n)\|^2 +\frac{\nu\ell_{\psi}}{2\lambda\rho}(2\ell_H +3\nu\ell_{\psi} -\omega)\|y_n-y_{n-1}\|^2.
\end{align*}
Now, we note that
\begin{align*}
\rho > \frac{\ell_H +\sqrt{\ell_H^2 +8(\ell_H +2\nu\ell_{\psi})^2}}{2\lambda} \geq \frac{\ell_H +\sqrt{\ell_H^2 +8\ell_H^2}}{2\lambda} =\frac{2\ell_H}{\lambda},    
\end{align*}
and so $\frac{1}{2\ell_H} >\frac{1}{\lambda\rho}$. Choosing $\omega =1$ if $\nu\ell_{\psi} =0$, and $\omega =\nu\ell_{\psi}$ if $\nu\ell_{\psi} >0$, it holds that
\begin{align*}
&\theta :=\frac{1}{2\ell_H} -\frac{1}{2\lambda\rho}\left(1 +\frac{\nu\ell_{\psi}}{\omega}\right) \geq \frac{1}{2\ell_H} -\frac{1}{2\lambda\rho}(1 +1) >0 \text{~~and~~} \\
&\frac{\nu\ell_{\psi}}{2\lambda\rho}(2\ell_H +3\nu\ell_{\psi} -\omega) \geq \frac{\nu\ell_{\psi}}{2\lambda\rho}(3\nu\ell_{\psi} -\omega) =\frac{\nu^2\ell_{\psi}^2}{\lambda\rho}. 
\end{align*}
This together with \ref{t:cvg_decrease} yields
\begin{align}\label{eq:L_n_bounded}
L_1 \geq L_n &\geq \left(\sum_{i=1}^m f_i(x_{i,n}) +P(\mathbf{x}_n) -G(\mathbf{x}_n)\right) +\left(H(y_n) -\frac{1}{2\ell_H}\|\nabla H(y_n)\|^2\right) +\frac{\rho}{2}\left\| \frac{z_n}{\rho} + \mathbf{Ax}_n+By_n-b \right\|^2 \notag \\ 
&\quad +\theta\|\nabla H(y_n)\|^2 +\frac{\nu^2\ell_{\psi}^2}{\lambda\rho}\|y_n-y_{n-1}\|^2,
\end{align}
in which the first term on the right hand side is bounded below since $\sum_{i=1}^m f_i(x_i)+P(\mathbf{x})-G(\mathbf{x})$ is coercive, the second term is bounded below due to {the assumption on $H$ and Lemma~\ref{l:diff}\ref{l:diff_bounded}}, while the remaining terms are nonnegative due to their positive coefficients. Therefore, all the terms are bounded. Again, as $\sum_{i=1}^m f_i(x_i)+P(\mathbf{x})-G(\mathbf{x})$ is coercive, the boundedness of the first term implies that $(\mathbf{x}_{n})_{n\in \mathbb{N}}$ is bounded. Since the last two terms are bounded, it follows from \eqref{eq:uppper_zn} that $(z_n)_{n\in \mathbb{N}}$ is bounded. Since $z_{n+1}=z_n+\rho(\mathbf{Ax}_{n+1}+By_{n+1}-b)$ and $B$ has full column rank, we also obtain that $(y_n)_{n\in \mathbb{N}}$ is bounded. Consequently, the sequence $(\mathbf{x}_n,y_n,z_n)_{n\in \mathbb{N}}$ is bounded.

On the other hand, since all the terms on the right hand side of \eqref{eq:L_n_bounded} are bounded below, so is the sequence $(L_n)_{n\in\mathbb{N}}$. Combining with the nonincreasing property of $(L_n)_{n\in\mathbb{N}}$ due to \ref{t:cvg_decrease}, it follows that $(L_n)_{n\in\mathbb{N}}$ is convergent. By rearranging \eqref{eq:decrease} and performing telescoping, we then have that
\begin{align*}
\sum_{n=1}^{+\infty}\delta_y \|y_{n+1}-y_n\|^2 + \delta_{\mathbf{x}}\sum_{n=1}^{+\infty}\|\mathbf{x}_{n+1}-\mathbf{x}_{n}\|^2 \leq \sum_{n=1}^{+\infty}(L_n - L_{n+1}) = L_1 - \lim_{n\to+\infty} L_n < +\infty,
\end{align*}
which implies that $\sum_{n=1}^{+\infty}\|\mathbf{x}_{n+1}-\mathbf{x}_n\|^2 < +\infty$, $\sum_{n=1}^{+\infty}\|y_{n+1}-y_n\|^2 <+\infty$, and by \eqref{eq:zy}, $\sum_{n=1}^{+\infty}\|z_{n+1}-z_n\|^2 < +\infty$. As a result, $\mathbf{x}_{n+1}-\mathbf{x}_{n} \to 0$, $y_{n+1}-y_n \to 0$, $z_{n+1}-z_n \to 0$, and $\mathbf{Ax}_n +By_n =(z_n -z_{n-1})/\rho +b \to b$ as $n \to +\infty$. It then follows from the definition of $L_n$ that the sequence $(\mathcal{L}_\rho(\mathbf{x}_{n},y_{n},z_{n}))_{n\in\mathbb{N}}$ is also convergent with $\lim_{n\to+\infty} \mathcal{L}_\rho(\mathbf{x}_{n},y_{n},z_{n}) =\lim_{n\to+\infty}L_n$.

Let $(\overline{\mathbf{x}},\overline{y},\overline{z})$ be a cluster point of the sequence  $(\mathbf{x}_n,y_n,z_n)_{n\in \mathbb{N}}$. Then there exists a subsequence $(\mathbf{x}_{k_n},y_{k_n},z_{k_n})_{n\in \mathbb{N}}$ that converges to $(\overline{\mathbf{x}},\overline{y},\overline{z})$. In view of \eqref{eq:update_step_x}, letting $n =k_n$ and using Proposition~\ref{p:Bregman}\ref{p:Bregman_3}\&\ref{p:Bregman_4}, we obtain that, for all $i \in \{1,\dots,m\}$ and all $x_i\in \mathbb{R}^{d_i}$,
\begin{align}\label{eq:kn+}
&f_i(x_{i,k_n+1}) +\langle \nabla_i P(\mathbf{x}_{k_n})- g_{i,k_n}, x_{i,k_n+1} -x_i \rangle + \langle z_{k_n}, A_i x_{i,k_n+1} - A_ix_i\rangle +\frac{\rho}{2}\|\mathbf{A}\mathbf{u}_{i,k_n+1}(x_{i,k_n+1}) +By_{k_n} -b \|^2 \notag \\
&\quad  - \frac{\rho}{2}\|\mathbf{A}\mathbf{u}_{i,k_n+1}(x_{i}) +By_{k_n} -b \|^2 +\frac{\mu\alpha}{2}\|x_{i,k_n+1}-x_{i,k_n}\|^2 - \frac{\nu\ell_{\phi}}{2}\|x_{i}-x_{i,k_n}\|^2 \leq f_i(x_{i}).
\end{align}
Since $\nabla P$ is continuous, $\nabla P(\mathbf{x}_{k_n}) \to \nabla P(\overline{\mathbf{x}})$ as $n\to +\infty$. As $G +\frac{\beta}{2}\|\cdot\|^2$ is a continuous convex function, it follows from \cite[Example~9.14]{Rockafellar1998} that $G$ is locally Lipschitz continuous. Since $\mathbf{x}_{k_n} \to \overline{\mathbf{x}}$ as $n\to +\infty$, using Lemma~\ref{l:upsemicont} and passing to a subsequence if necessary, we can and do assume that $g_{k_n}\to \overline{g} \in \partial_L G (\overline{\mathbf{x}})$ as $n\to +\infty$. Now, for each $i \in \{1,\dots,m\}$, letting $x_i =\overline{x}_i$ and $n\to +\infty$ in \eqref{eq:kn+} with noting that $x_{i,k_n+1} \to \overline{x}_i$, $y_{k_n} \to \overline{y}$, and $z_{k_n} \to \overline{z}$, we have that $\limsup_{n \to +\infty} f_i(x_{i,k_n+1}) \leq f_i(\overline{x}_i)$, which together with the lower semicontinuity of $f_i$ yields
\begin{align*}
\lim_{n \to +\infty} f_i(x_{i,k_n+1}) = f_i(\overline{x}_i).
\end{align*}
As $H$, $P$, and $G$ are continuous, it follows that $\lim_{n \to +\infty}\mathcal{L}_\rho(\mathbf{x}_{k_n +1 },y_{k_n+1},z_{k_n +1}) = \mathcal{L}_\rho(\overline{\mathbf{x}},\overline{y},\overline{z})$. Since the sequence $(\mathcal{L}_\rho(\mathbf{x}_{n},y_{n},z_{n}))_{n\in\mathbb{N}}$ is convergent, we deduce that
\begin{align*}
\lim_{n \to +\infty}\mathcal{L}_\rho(\mathbf{x}_{n },y_{n},z_{n}) =\mathcal{L}_\rho(\overline{\mathbf{x}},\overline{y},\overline{z}).    
\end{align*}

We next show that $(\overline{\mathbf{x}},\overline{y},\overline{z})$ is a stationary point of $\mathcal{L}_\rho$. In \eqref{eq:z-update} and \eqref{eq:B'z} with $n$ replaced by $k_n-1$, letting $n\to +\infty$ and using the coninuity of $\nabla H$ and the Lipschitz continuity of $\nabla \psi$, we obtain that $0 =\mathbf{A\overline{x}} + B\overline{y} -b$ and $0 =\nabla H(\overline{y}) + B^\top \overline{z}$. In turn, letting $n \to +\infty$ in \eqref{eq:kn+}, we have for all $i \in \{1,\dots,m\}$ and all $x_i\in \mathbb{R}^{d_i}$ that
\begin{align*}
&f_i(\overline{x}_i) +\langle \nabla_i P(\overline{\mathbf{x}})-\overline{g}_i, \overline{x}_i-x_i \rangle + \langle \overline{z}, A_i\overline{x}_i-A_ix_i \rangle\\
&\quad +\frac{\rho}{2}\|A_1\overline{x}_1 +\dots +A_{i-1}\overline{x}_{i-1} +A_i\overline{x}_i +A_{i+1}\overline{x}_{i+1}+\dots+A_m \overline{x}_{m} +B\overline{y} -b \|^2 \\
&\quad -\frac{\rho}{2}\|A_1\overline{x}_1 +\dots +A_{i-1}\overline{x}_{i-1} +A_ix_i +A_{i+1}\overline{x}_{i+1}+\dots+A_m \overline{x}_{m} +B\overline{y} -b \|^2 -\frac{\nu\ell_{ \phi}}{2}\|x_i-\overline{x}_i\|^2 \leq f_i(x_i),
\end{align*}
which can be written as
\begin{align*}
h_i(\overline{x}_i) \leq h_i(x_i),    
\end{align*}
where $h_i(x_i) =f_i(x_i) +\langle \nabla_i P(\overline{\mathbf{x}}) -\overline{g}_i, x_i \rangle + \langle \overline{z}, A_ix_i \rangle +\frac{\rho}{2}\|A_1\overline{x}_1 +\dots +A_{i-1}\overline{x}_{i-1} +A_ix_i +A_{i+1}\overline{x}_{i+1}+\dots+A_m \overline{x}_{m} +B\overline{y} -b \|^2 +\frac{\nu\ell_{\phi}}{2}\|x_i-\overline{x}_i\|^2$. It follows that, for all $i \in \{1,\dots,m\}$,
\begin{align*}
0 &\in \partial_L h_i(\overline{x}_i) =\partial_L (f_i)(\overline{x}_i) +\nabla_i P(\overline{\mathbf{x}})- \overline{g}_i +A_i^\top \overline{z} +\rho A_i^\top (\mathbf{A}\overline{\mathbf{x}}+B\overline{y}-b).
\end{align*}
Since $\mathbf{A}\overline{\mathbf{x}}+B\overline{y}-b =0$, we obtain that
\begin{align*}
0 \in \partial_L (f_1)(\overline{x}_1)\times \cdots \times \partial_L (f_m)(\overline{x}_m) +\nabla P(\overline{\mathbf{x}}) -\overline{g} +\mathbf{A}^\top \overline{z},
\end{align*}
which completes the proof due to Lemma~\ref{l:cal}\ref{l:cal_sum}\&\ref{l:cal_sep} and the strict differentiability of $G$.
\end{proof}

\begin{remark}[Conditions for the boundedness of the generated sequence] 
In order to establish the boundedness of the sequence $(\mathbf{x}_n,y_n,z_n)_{n\in \mathbb{N}}$, authors in \cite{Guo2016,Sun2017,Tu2019} require that there exists $\sigma\in (0, +\infty)$ such that 
\begin{align*}
\inf_{y\in \mathbb{R}^q} \left(H(y) -\sigma\|\nabla H(y)\|^2\right) >-\infty. 
\end{align*}
This makes it challenging to verify the existence of $\sigma$ in practice. In our analysis in Theorem~\ref{t:cvg}\ref{t:cvg_seq}, we replace this condition with the requirement that $H$ is bounded below, which is much easier to verify.   
\end{remark}

We now establish the convergence of the full sequence generated by Algorithm~\ref{algo:BPLA}. Recall that a proper lower semicontinuous function $\mathcal{F}\colon \mathcal{H}\to \left(-\infty, +\infty\right]$ satisfies the \emph{Kurdyka--Łojasiewicz (KL) property} \cite{Kurdyka1998,Loja63} at $\overline{x} \in \dom \partial_L \mathcal{F}$ if there are $\eta \in (0, +\infty]$, a neighborhood $V$ of $\overline{x}$, and a continuous concave function $\varphi: \left[0, \eta\right) \to \mathbb{R}_+$ such that $\varphi$ is continuously differentiable with $\varphi' > 0$ on $(0, \eta)$, $\varphi(0) = 0$, and, for all $x \in V$ with $
\mathcal{F}(\overline{x}) < \mathcal{F}(x) < \mathcal{F}(\overline{x}) + \eta$, 
\begin{equation*}
\varphi'(\mathcal{F}(x) -\mathcal{F}(\overline{x})) \dist(0, \partial_L \mathcal{F}(x)) \geq 1.   
\end{equation*}
We say that $\mathcal{F}$ is a \emph{KL function} if it satisfies the KL property at any point in $\dom \partial_L \mathcal{F}$. If $\mathcal{F}$ satisfies the KL property at $\overline{x} \in \dom \partial_L \mathcal{F}$, in which the corresponding function $\varphi$ can be chosen as $\varphi(t) = c t ^{1 - \lambda}$ for some $c \in \mathbb{R}_{++}$ and $\lambda \in [0, 1)$, then $\mathcal{F}$ is said to satisfy the \emph{KL property at $\overline{x}$ with exponent $\lambda$}. The function $\mathcal{F}$ is called a \emph{KL function with exponent $\lambda$} if it is a KL function and has the same exponent $\lambda$ at any $x \in \dom \partial_L \mathcal{F}$. Now we will present the proof of the full sequential convergence in the following theorem.

\begin{theorem}[Full sequential convergence]
\label{t:full}
Suppose that Assumption~\ref{a:standing} holds, that $\sum_{i=1}^m f_i(x_i)+P(\mathbf{x})-G(\mathbf{x})$ is coercive, that $H$ is bounded below, that $G$ is differentiable with $\ell_G$-Lipschitz continuous gradient, and that, for all $i\in \{1, \dots, m\}$, $\nabla{\phi_i}$ is $\ell_{\phi}$-Lipschitz continuous. Let $(\mathbf{x}_{n},y_{n},z_{n})_{n\in\mathbb{N}}$ be the sequence generated by Algorithm~\ref{algo:BPLA} with $z_0\in \Ima(B)$ and $\limsup_{n\to +\infty} \mu_n =\overline{\mu} <+\infty$.
Define
\begin{align*}
\mathcal{F}(\mathbf{x},y,z,t):=\mathcal{L}_\rho(\mathbf{x},y,z) +c\|y-t\|^2, \text{~~where~} c=\frac{(\ell_H+2\nu\ell_{\psi})\nu\ell_{\psi}}{\lambda\rho}.
\end{align*}
\begin{enumerate}
\item\label{t:full_cvg} 
Suppose that $\mathcal{F}$ is a KL function. Then the sequence $(\mathbf{x}_{n},y_{n},z_{n})_{n\in\mathbb{N}}$ converges to a stationary point $(\mathbf{x}^{*},y^{*},z^{*})$ of $\mathcal{L}_\rho$ and \begin{align*}
\sum_{n=0}^{+\infty} \|(\mathbf{x}_{n+1},y_{n+1},z_{n+1})-(\mathbf{x}_{n},y_{n},z_{n})\| < +\infty.
\end{align*}
\item\label{t:full_rate} 
Suppose that $\mathcal{F}$ is a KL function with exponent $\kappa\in [0,1)$. Then the following hold:
\begin{enumerate}
\item\label{t:full_rate_finite}
If $\kappa =0$, then $(\mathbf{x}_{n},y_{n},z_{n})_{n\in\mathbb{N}}$ converges to $(\mathbf{x}^{*},y^{*},z^{*})$ in a finite number of steps. 
\item\label{t:full_rate_linear}
If $\kappa\in (0,\frac{1}{2}]$, then there exist $\overline{\gamma}\in \mathbb{R}_{++}$ and $\zeta\in \left(0,1\right)$ such that, for all $n\in \mathbb{N}$, 
\begin{align*}
&\|(\mathbf{x}_{n},y_{n},z_{n})-(\mathbf{x}^{*},y^{*},z^{*})\| \leq \overline{\gamma}\zeta^{\frac{n}{2}}, \\
&\|\mathbf{Ax}_n+By_n-b\| \leq \overline{\gamma} \zeta^{\frac{n}{2}}, \\
&\lvert \mathcal{L}_\rho(\mathbf{x}_{n},y_{n},z_{n}) -\mathcal{L}_\rho(\mathbf{x}^{*},y^{*},z^{*}) \rvert \leq \overline{\gamma}\zeta^n, \\
\text{and~} &|F(\mathbf{x}_n,y_n)-F(\mathbf{x}^{*},y^{*})| \leq \overline{\gamma} \zeta^{\frac{n}{2}}.
\end{align*}
\item\label{t:full_rate_sublinear}
If $\kappa\in (\frac{1}{2},1)$, then there exists $\overline{\gamma}\in \mathbb{R}_{++}$ such that, for all $n\in \mathbb{N}$,
\begin{align*}
&\|(\mathbf{x}_{n},y_{n},z_{n})-(\mathbf{x}^{*},y^{*},z^{*})\| \leq \overline{\gamma} n^{-\frac{1-\kappa}{2\kappa-1}}, \\
&\|\mathbf{Ax}_n+By_n-b\| \leq \overline{\gamma} n^{-\frac{1-\kappa}{2\kappa-1}}, \\
&\lvert \mathcal{L}_\rho(\mathbf{x}_{n},y_{n},z_{n}) -\mathcal{L}_\rho(\mathbf{x}^{*},y^{*},z^{*})\rvert \leq \overline{\gamma} n^{-\frac{2-2\kappa}{2\kappa-1}}, \\
\text{and~} &|F(\mathbf{x}_n,y_n)-F(\mathbf{x}^{*},y^{*})| \leq \overline{\gamma} n^{-\frac{1-\kappa}{2\kappa-1}}.
\end{align*}
\end{enumerate}
\end{enumerate}
\end{theorem}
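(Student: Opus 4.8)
The plan is to recognize this as a standard Kurdyka--{\L}ojasiewicz (KL) convergence argument built on the Lyapunov function $\mathcal{F}$, whose fourth variable $t$ is designed to track the previous iterate $y_{n-1}$. Setting $w_n := (\mathbf{x}_n, y_n, z_n, y_{n-1})$, Theorem~\ref{t:cvg} already gives $L_n = \mathcal{F}(w_n)$, so its conclusions (boundedness, summability of the increments, and the identification of cluster points as stationary points) are at our disposal. I would then verify the three ingredients of the abstract KL framework for $\mathcal{F}$ and $(w_n)$ --- a sufficient-decrease inequality, a relative-error (subgradient) bound, and a continuity condition --- and feed these into the classical Attouch--Bolte--Svaiter/Bolte--Sabach--Teboulle machinery.

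Ingredient one (sufficient decrease) is exactly \eqref{eq:decrease}, namely $\mathcal{F}(w_{n+1}) + \delta_{\mathbf{x}}\|\mathbf{x}_{n+1}-\mathbf{x}_n\|^2 + \delta_y\|y_{n+1}-y_n\|^2 \le \mathcal{F}(w_n)$ with $\delta_{\mathbf{x}},\delta_y>0$; ingredient three (continuity along convergent subsequences and constancy of $\mathcal{F}$ on the limit set) is supplied by Theorem~\ref{t:cvg}\ref{t:cvg_seq}. The substantive work is ingredient two: I would compute $\partial_L\mathcal{F}(w_{n+1})$ block by block. For the $\mathbf{x}$-block I insert the subgradient of $f_i$ extracted from the optimality condition underlying \eqref{eq:update_step_x} into $\partial_L^{x_i}\mathcal{F}$; after cancellation the surviving terms are $\nabla_iP(\mathbf{x}_{n+1})-\nabla_iP(\mathbf{x}_n)$, $\nabla_iG(\mathbf{x}_n)-\nabla_iG(\mathbf{x}_{n+1})$ (using that $G$ is differentiable, so $g_{i,n}=\nabla_iG(\mathbf{x}_n)$), $A_i^\top(z_{n+1}-z_n)$, the Bregman term $\mu_n(\nabla\phi_i(x_{i,n+1})-\nabla\phi_i(x_{i,n}))$, and the block Gauss--Seidel defect $\rho A_i^\top(\mathbf{Ax}_{n+1}-\mathbf{A}\mathbf{u}_{i,n+1}(x_{i,n+1})+B(y_{n+1}-y_n)) = \rho A_i^\top(\sum_{j>i}A_j(x_{j,n+1}-x_{j,n})+B(y_{n+1}-y_n))$. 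Each is dominated, via Lipschitz continuity of $\nabla P$, $\nabla G$, $\nabla\phi_i$ and the hypothesis $\limsup_n\mu_n=\overline\mu<+\infty$, by a multiple of $\|\mathbf{x}_{n+1}-\mathbf{x}_n\|$, $\|y_{n+1}-y_n\|$ and $\|z_{n+1}-z_n\|$. The $y$-block reduces, through the optimality of the $y$-update (cf.\ \eqref{eq:B'z}), to $B^\top(z_{n+1}-z_n)-\nu_n(\nabla\psi(y_{n+1})-\nabla\psi(y_n))+2c(y_{n+1}-y_n)$; the $z$-block equals $(z_{n+1}-z_n)/\rho$ by \eqref{eq:z-update}; and the $t$-block equals $-2c(y_{n+1}-y_n)$. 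Finally, bounding $\|z_{n+1}-z_n\|$ by \eqref{eq:zy} in terms of $\|y_{n+1}-y_n\|$ and $\|y_n-y_{n-1}\|$ turns every bound into a multiple of the coordinates of $\|w_{n+1}-w_n\|$, yielding $\dist(0,\partial_L\mathcal{F}(w_{n+1}))\le M\|w_{n+1}-w_n\|$ for some $M>0$.

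With the three ingredients in hand, part~\ref{t:full_cvg} follows the classical route: if some $\mathcal{F}(w_{n_0})$ already equals the limit value $\mathcal{F}^\ast$, the descent forces the sequence to be eventually constant; otherwise I apply the uniformized KL property on the compact connected limit set (compactness from Theorem~\ref{t:cvg}\ref{t:cvg_seq}) and telescope $\varphi(\mathcal{F}(w_n)-\mathcal{F}^\ast)$ against the relative-error bound to obtain $\sum_n\|w_{n+1}-w_n\|<+\infty$. This makes $(w_n)$ Cauchy, hence convergent, so $(\mathbf{x}_n,y_n,z_n)\to(\mathbf{x}^\ast,y^\ast,z^\ast)$, whose unique cluster point is stationary by Theorem~\ref{t:cvg}\ref{t:cvg_seq}. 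For part~\ref{t:full_rate} I would run the standard exponent-to-rate conversion on the tail lengths $S_n=\sum_{k\ge n}\|w_{k+1}-w_k\|$: $\kappa=0$ gives finite termination, $\kappa\in(0,\tfrac12]$ gives R-linear decay $S_n\le\overline\gamma\zeta^{n/2}$, and $\kappa\in(\tfrac12,1)$ gives the polynomial decay $S_n=O(n^{-(1-\kappa)/(2\kappa-1)})$. Because $\|w_n-w^\ast\|\le S_n$ and $\|\mathbf{Ax}_n+By_n-b\|=\|z_n-z_{n-1}\|/\rho\le \|w_n-w_{n-1}\|$, the iterate and feasibility estimates inherit these rates; the $\mathcal{L}_\rho$-value estimate gains an extra power through the sufficient-decrease inequality, while $F(\mathbf{x}_n,y_n)=\mathcal{L}_\rho(\mathbf{x}_n,y_n,z_n)-\langle z_n,\mathbf{Ax}_n+By_n-b\rangle-\tfrac{\rho}{2}\|\mathbf{Ax}_n+By_n-b\|^2$ picks up the feasibility residual only linearly, which is exactly why its rate matches the slower (un-squared) one in the statement.

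I expect the relative-error bound to be the main obstacle, for two reasons. First, the block Gauss--Seidel structure means the optimality condition for $x_{i,n+1}$ is written at the mixed point $\mathbf{u}_{i,n+1}(x_{i,n+1})$ rather than at $\mathbf{x}_{n+1}$, so one must carefully track the defect $\sum_{j>i}A_j(x_{j,n+1}-x_{j,n})$ when passing to $\partial_L\mathcal{F}(w_{n+1})$. Second, and more fundamentally, the error bound unavoidably involves $\|z_{n+1}-z_n\|$, which by \eqref{eq:zy} depends on the \emph{previous} primal step $\|y_n-y_{n-1}\|$, whereas the sufficient decrease controls only the current increments; reconciling this mismatch is precisely the role of the augmented variable $t$ and the corrective term $c\|y-t\|^2$ in $\mathcal{F}$, which promote $\|y_n-y_{n-1}\|$ to a genuine coordinate of $\|w_{n+1}-w_n\|$. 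Getting the KL recursion to close despite this two-step coupling is the delicate point.
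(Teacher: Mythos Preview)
Your plan is correct and matches the paper's proof almost step for step: the same Lyapunov $w_n=(\mathbf{x}_n,y_n,z_n,y_{n-1})$, the same block-by-block subgradient computation using the optimality conditions of the $x_i$- and $y$-updates, and the same route from KL to finite length and then to rates via the tail sums.

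The one place where the paper is more explicit than your outline is precisely the ``delicate point'' you flag at the end. Your relative-error bound $\dist(0,\partial_L\mathcal{F}(w_{n+1}))\le M\|w_{n+1}-w_n\|$ is valid, but it does not pair with the sufficient decrease \eqref{eq:decrease}, which controls only the partial increment $\Delta_n:=\|\mathbf{x}_{n+1}-\mathbf{x}_n\|+\|y_{n+1}-y_n\|$ and \emph{not} the $z$- and $t$-components of $\|w_{n+1}-w_n\|$. Consequently the classical one-step Bolte--Sabach--Teboulle recipe does not apply verbatim. The paper resolves this by working throughout with $\Delta_n$: it rewrites the decrease as $\mathcal{F}(w_{n+1})+C_1\Delta_n^2\le\mathcal{F}(w_n)$, uses \eqref{eq:zy} and \eqref{eq:BTZy} to convert your bound into the two-lag estimate $\dist(0,\partial_L\mathcal{F}(w_n))\le C_2(\Delta_{n-1}+\Delta_{n-2})$, and then invokes the multi-step abstract KL framework of \cite[Theorem~5.1]{BDL22} with $I=\{1,2\}$ rather than the single-step ABS/BST theorem. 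The rates in \ref{t:full_rate} likewise come from \cite[Theorem~5.1(iv)]{BDL22} applied to $(\mathbf{x}_n,y_n)$ and $\mathcal{F}$, after which $\|z_n-z^*\|$ is recovered from \eqref{eq:B'z} (passed to the limit) and the remaining estimates follow as you describe.
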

\begin{proof}
For each $n\in \mathbb{N}$, set $w_n=(\mathbf{x}_{n+1},y_{n+1},z_{n+1},y_n)$ and $\Delta_n =\|\mathbf{x}_{n+2}-\mathbf{x}_{n+1}\| +\|y_{n+2}-y_{n+1}\|$. Let $n \in \mathbb{N}^*$. According to Theorem~\ref{t:cvg}, we have that
\begin{align}\label{eq:auxilary_decrease}
\mathcal{F}(w_{n+1}) +\delta_{\mathbf{x}}\|\mathbf{x}_{n+2}-\mathbf{x}_{n+1}\|^2 + \delta_y \|y_{n+2}-y_{n+1}\|^2 \leq \mathcal{F}(w_{n}),
\end{align}
that $w_{n+1} - w_n \to 0$ as $n\to +\infty$, that $(w_n)_{n\in\mathbb{N}}$ is bounded, that any of its cluster point $\overline{w}=(\overline{\mathbf{x}},\overline{y},\overline{z},\overline{y})$ is a stationary point of \eqref{eq:prob}, and also that $\mathcal{F}(w_{n}) \to \mathcal{F}(\overline{w}) = \mathcal{L}_{\rho}(\overline{\mathbf{x}},\overline{y},\overline{z})$ as $n\to +\infty$. Notice that
\begin{align*}
\Delta_n^2 \leq \left(\frac{1}{\delta_{\mathbf{x}}} +\frac{1}{\delta_y}\right) \left(\delta_{\mathbf{x}}\|\mathbf{x}_{n+2}-\mathbf{x}_{n+1}\|^2 + \delta_y \|y_{n+2}-y_{n+1}\|^2\right),    
\end{align*}
which together with \eqref{eq:auxilary_decrease} yields
\begin{align*}
\mathcal{F}(w_{n+1}) +C_1\Delta_n^2 \leq \mathcal{F}(w_{n}), \text{~~where~} C_1 =\frac{\delta_{\mathbf{x}}\delta_y}{\delta_{\mathbf{x}}+\delta_y}.
\end{align*}

Next, we have that
\begin{align*}
\partial_L ^\mathbf{x} \mathcal{F}(w_n) = \partial_L \left( \sum_{i=1}^{m} f_i\right)(\mathbf{x}_{n+1}) + \nabla P(\mathbf{x}_{n+1}) - \nabla G(\mathbf{x}_{n+1}) + \mathbf{A}^\top z_{n+1} + \rho \mathbf{A}^\top(\mathbf{Ax}_{n+1} + By_{n+1} -b)    
\end{align*}
and, for all $i \in \{1,\dots,m\}$,
\begin{align*}
\partial_L ^{x_i} \mathcal{F}(w_n) = \partial_L f_i(x_{i,n+1})+\nabla_i P(\mathbf{x}_{n+1}) - \nabla_i G(\mathbf{x}_{n+1}) + A_i^\top z_{n+1} + \rho A_i^\top(\mathbf{Ax}_{n+1} + By_{n+1} -b).    
\end{align*}
By revoking the optimality condition for the update of $x_{i,n+1}$ in Step~\ref{step:A2_2} of Algorithm~\ref{algo:BPLA},
\begin{align*}
0 &\in \partial_L f_i(x_{i,n+1}) +\nabla_i P(\mathbf{x}_{n}) -  \nabla_i G(\mathbf{x}_n) + A_i^\top z_n   +\rho A_i^\top(\mathbf{A}\mathbf{u}_{i,n+1}(x_{i,n+1} ) +By_n -b ) \\
&\quad +\mu_n(\nabla \phi_i (x_{i,n+1})- \nabla \phi_i (x_{i,n})),    
\end{align*}
which combined with the above equality implies that
\begin{align*}
&(\nabla_i P(\mathbf{x}_{n+1}) -\nabla_i P(\mathbf{x}_{n})) -(\nabla_i G(\mathbf{x}_{n+1}) -\nabla_i G(\mathbf{x}_n)) +A_i^\top(z_{n+1}-z_n) -\mu_n(\nabla \phi_i (x_{i,n+1}) -\nabla \phi_i (x_{i,n})) \\
&\quad +\rho A_i^\top (A_{i+1}x_{i+1,n+1} +\dots +A_mx_{m,n+1} -A_{i+1}x_{i+1,n} -\dots -A_mx_{m,n} +By_{n+1} -By_n) \in \partial_L ^{x_i} \mathcal{F}(w_n).
\end{align*}
Therefore,
\begin{align}\label{eq:dxi}
\dist(0,\partial_L ^{x_i} \mathcal{F}(w_n)) 
&\leq \|\nabla_i P(\mathbf{x}_{n+1}) -\nabla_i P(\mathbf{x}_{n})\| +\|\nabla_i G(\mathbf{x}_{n+1}) -\nabla_i G(\mathbf{x}_n)\| \notag \\
&\quad +\|A_i^\top(z_{n+1}-z_n)\| +\mu_n\|\nabla \phi_i (x_{i,n+1}) -\nabla \phi_i (x_{i,n})\| \notag \\
&\quad +\rho \|A_i^\top (A_{i+1}x_{i+1,n+1} +\dots +A_mx_{m,n+1} -A_{i+1}x_{i+1,n} -\dots -A_mx_{m,n} +By_{n+1} -By_n\| \notag \\ 
&\leq \ell_P\|\mathbf{x}_{n+1} - \mathbf{x}_{n}\| +\ell_G\|\mathbf{x}_{n} - \mathbf{x}_{n+1}\| +\|A_i^\top\|\|z_{n+1}-z_n\| +\mu_n \ell_{\phi}\|x_{i,n+1}-x_{i,n}\| \notag \\
&\quad +\rho\sum_{j=i+1}^{m} \|A_i^\top\| \|A_j\| \|x_{j,n+1}-x_{j,n}\| +\rho\|A_i^\top\| \|B\| \|y_{n+1}-y_n\| \notag \\
&\leq  \left(\ell_P +\ell_G +\mu_n\ell_{\phi} +\rho\sum_{j=i+1}^{m} \|A_i^\top\| \|A_j\|\right)\|\mathbf{x}_{n+1}-\mathbf{x}_{n}\| +\|A_i^\top\|\|z_{n+1}-z_n\| \notag \\
&\quad +\rho\|A_i^\top\| \|B\| \|y_{n+1}-y_n\|.
\end{align}

We now see that
\begin{align*}
\partial_L ^y \mathcal{F}(w_n) =\nabla H(y_{n+1}) + B^\top z_{n+1} +\rho B^\top(\mathbf{Ax}_{n+1} + By_{n+1} -b) +2c(y_{n+1}-y_{n})    
\end{align*}
and from the optimality condition for the update of $y_{n+1}$ in Algorithm~\ref{algo:BPLA} that 
\begin{align*}
0 &= \nabla H(y_{n+1}) + B^\top z_n +\rho B^\top(\mathbf{Ax}_{n+1} +By_{n+1} -b) +\nu_n(\nabla \psi(y_{n+1}) - \nabla \psi(y_n)).
\end{align*}
This leads to
\begin{align*}
\partial_L ^y \mathcal{F}(w_n) &=B^{\top}(z_{n+1}-z_n) -\nu_n(\nabla \psi(y_{n+1}) -\nabla \psi(y_n)) + 2c(y_{n+1}-y_{n}),   
\end{align*}
and so
\begin{align}\label{eq:dy}
\dist(0,\partial_L ^{y} \mathcal{F}(w_n)) 
&\leq \|B^{\top}(z_{n+1}-z_n)\| +\nu_n\|\nabla \psi(y_{n+1}) -\nabla \psi(y_n)\| + 2c\|y_{n+1}-y_{n})\| \notag \\
&\leq \|B^{\top}(z_{n+1}-z_n)\| +(\nu\ell_\psi +2c)\|y_{n+1}-y_{n})\|.
\end{align}

Since $\partial_L ^z \mathcal{F}(w_n) =\mathbf{Ax}_{n+1} + By_{n+1} -b =\frac{1}{\rho}(z_{n+1} -z_n)$ and
$\partial_L ^t \mathcal{F}(w_n) =-2c(y_{n+1}-y_{n})$, it holds that
\begin{align}\label{eq:dzdt}
\dist(0,\partial_L ^{z} \mathcal{F}(w_n)) =\frac{1}{\rho}\|z_{n+1}-z_n\| \text{~~and~~}
\dist(0,\partial_L ^{t} \mathcal{F}(w_n)) =2c\|y_{n+1}-y_n\|.    
\end{align}
Combining \eqref{eq:dxi}, \eqref{eq:dy}, and \eqref{eq:dzdt}, noting that $\limsup_{n\to +\infty} \mu_n =\overline{\mu} <+\infty$, and using \eqref{eq:BTZy} and \eqref{eq:zy} in the proof of Theorem~\ref{t:cvg}, we derive that there exists $n_0\in \mathbb{N}^*$ and $C_2 \in \mathbb{R}_{++}$ such that, for all $n\geq n_0$, 

\begin{align*}
\dist(0, \partial_L \mathcal{F}(w_n)) 
\leq C_2 \left(\|\mathbf{x}_{n+1}-\mathbf{x}_{n}\| +\|y_{n+1}-y_n\| +\|y_{n}-y_{n-1}\|\right) 
\leq C_2(\Delta_{n-1} +\Delta_{n-2}).
\end{align*}

\ref{t:full_cvg}: We see that all the conditions in the \emph{abstract convergence} framework \cite[Theorem~5.1]{BDL22} are satisfied with $I =\{1,2\}$, $\lambda_1=\lambda_2=1/2$, $\alpha_n \equiv C_1$, $\beta_n \equiv 1/(2C_2)$, and $\varepsilon_n \equiv 0 $. By \cite[Theorem~5.1(i)]{BDL22}, 
\begin{align*}
\sum_{n=0}^{+\infty} (\|\mathbf{x}_{n+2}-\mathbf{x}_{n+1}\|+\|y_{n+2}-y_{n+1}\|) =\sum_{n=0}^{+\infty} \Delta_n < +\infty,
\end{align*}
which implies that $\sum_{n=0}^{+\infty} \|\mathbf{x}_{n+1}-\mathbf{x}_{n}\| <+\infty$ and $\sum_{n=0}^{+\infty} \|y_{n+1}-y_{n}\| < +\infty$. Together with \eqref{eq:zy}, we derive that $\sum_{n=0}^{+\infty} \|z_{n+1}-z_n\|<+\infty$, and hence 
\begin{align*}
\sum_{n=0}^{+\infty} \|(\mathbf{x}_{n+1},y_{n+1},z_{n+1})-(\mathbf{x}_{n},y_{n},z_{n})\| < +\infty,
\end{align*}
which yields the convergence of $(\mathbf{x}_{n},y_{n},z_{n})_{n\in\mathbb{N}}$ to $(\mathbf{x}^{*},y^{*},z^{*})$. In view of Theorem~\ref{t:cvg}\ref{t:cvg_seq}, $(\mathbf{x}^{*},y^{*},z^{*})$ is a stationary point of $\mathcal{L}_\rho$.

\ref{t:full_rate_finite}: This follows from the arguments in the proof of \cite[Theorem~5.1]{BDL22} and \cite[Theorem~2(i)]{Attouch2007}. 

\ref{t:full_rate_linear}: According to \cite[Theorem~5.1(iv)]{BDL22}, there exist $\gamma_0\in \mathbb{R}_{++}$ and $\zeta\in \left(0,1\right)$ such that, for all $n\in \mathbb{N}^*$, 
\begin{align*}
\|(\mathbf{x}_{n},y_{n})-(\mathbf{x}^{*},y^{*})\|\leq\gamma_0 \zeta^{\frac{n}{2}} \text{~~and~~}
|\mathcal{F}(\mathbf{x}_{n},y_{n},z_{n},y_{n-1})-\mathcal{F}(\mathbf{x}^{*},y^{*},z^{*},y^*)| \leq \gamma_0\zeta^n.
\end{align*}
We derive that, for all $n\in \mathbb{N}^*$, $\|\mathbf{x}_n-\mathbf{x}^{*}\|\leq \gamma_0\zeta^{\frac{n}{2}}$ and $\|y_n-y^{*}\|\leq \gamma_0\zeta^{\frac{n}{2}}$, which yields 
\begin{align*}
\|y_n-y_{n-1}\| \leq \|y_n-y^{*}\|+\|y_{n-1}-y^{*}\| \leq (1+\zeta^{-\frac{1}{2}})\gamma_0\zeta^{\frac{n}{2}}.    
\end{align*}
By passing to the limit in \eqref{eq:B'z}, $B^\top z^{*}=-\nabla H(y^{*})$. Following the same steps as in \eqref{eq:BTZy} and \eqref{eq:zy}, we obtain that, for all $n\in \mathbb{N}^*$,
\begin{align*}
\|z_n-z^{*}\| &\leq \frac{\ell_H}{\sqrt{\lambda}}\|y_n-y^{*}\| + \frac{\nu \ell_{\psi}}{\sqrt{\lambda}}\|y_{n}-y_{n-1}\|\leq \gamma_1\zeta^{\frac{n}{2}}, \text{~~where~} \gamma_1 :=\frac{\ell_H\gamma_0+(1+\zeta^{-\frac{1}{2}})\nu\ell_{\psi}\gamma_0}{\sqrt{\lambda}}.
\end{align*}
Consequently,
\begin{align*}
&\|(\mathbf{x}_{n},y_{n},z_{n})-(\mathbf{x}^{*},y^{*},z^{*})\| \leq \|\mathbf{x}_{n}-\mathbf{x}^{*}\| + \|y_{n}-y^{*}\| + \|z_{n}-z^{*}\|\leq (2\gamma_0 +\gamma_1)\zeta^{\frac{n}{2}}, \\
&\|\mathbf{Ax}_n+By_n-b\| =\frac{1}{\rho}\|z_n-z_{n-1}\| \leq \frac{1}{\rho}(\|z_n-z^{*}\| +\|z_{n-1}-z^{*}\|) \leq \frac{(1+\zeta^{-\frac{1}{2}})\gamma_1}{\rho} \zeta^{\frac{n}{2}}, \\
\text{and~} &\|z_n\| \leq \|z_n-z^{*}\| + \|z^{*}\| \leq \gamma_1 \zeta^{\frac{n}{2}} + \|z^{*}\|.
\end{align*}

We now deduce from the definition of $\mathcal{F}$ that
\begin{align*}
|\mathcal{L}_{\rho}(\mathbf{x}_{n},y_{n},z_{n})- \mathcal{L}_{\rho}(\mathbf{x}^{*},y^{*},z^{*})|&= \left|\mathcal{F}(\mathbf{x}_{n},y_{n},z_{n},y_{n-1})-\mathcal{F}(\mathbf{x}^{*},y^{*},z^{*},y^*) - c\|y_n-y_{n-1}\|^2\right|\\
&\leq |\mathcal{F}(\mathbf{x}_{n},y_{n},z_{n},y_{n-1})-\mathcal{F}(\mathbf{x}^{*},y^{*},z^{*},y^*)| + c\|y_n-y_{n-1}\|^2\\
&\leq \gamma_2\zeta^n, \text{~~where~} \gamma_2 :=\gamma_0 + c(1+\zeta^{-\frac{1}{2}})^2\gamma_0^2
\end{align*}
and from the definition of $\mathcal{L}_\rho$ that 
\begin{align*}
|F(\mathbf{x}_n,y_n)-F(\mathbf{x}^{*},y^{*})| &= |\mathcal{L}_{\rho}(\mathbf{x}_n,y_n,z_n)-\mathcal{L}_{\rho}(\mathbf{x}^{*},y^{*},z^{*})-\langle z_n, \mathbf{Ax}_n + By_n-b\rangle - \frac{\rho}{2}\|\mathbf{Ax}_n + By_n-b\|^2| \\
&\leq |\mathcal{L}_{\rho}(\mathbf{x}_n,y_n,z_n)-\mathcal{L}_{\rho}(\mathbf{x}^{*},y^{*},z^{*})| + \|z_n\|\|\mathbf{Ax}_n + By_n-b\| + \frac{\rho}{2}\|\mathbf{Ax}_n + By_n-b\|^2  \\ 
&\leq \gamma_2\zeta^n + \left(\gamma_1 \zeta^{\frac{n}{2}} + \|z^{*}\|\right)\frac{(1+\zeta^{-\frac{1}{2}})\gamma_1}{\rho} \zeta^{\frac{n}{2}}+ \frac{(1+\zeta^{-\frac{1}{2}})^2\gamma_1^2}{2\rho}\zeta^n \\
&\leq \left(\gamma_2 +\frac{(3+\zeta^{-\frac{1}{2}})(1+\zeta^{-\frac{1}{2}})\gamma_1^2}{2\rho} +\frac{\|z^{*}\|(1+\zeta^{-\frac{1}{2}})\gamma_1}{\rho}\right)\zeta^{\frac{n}{2}},
\end{align*}
where the last inequality follows from the fact that $\zeta^n \leq \zeta^{\frac{n}{2}}$ since $\zeta \in (0,1)$. By letting 
\begin{align*}
\overline{\gamma} =\max \left\{ 2\gamma_0+\gamma_1, \frac{(1+\zeta^{-\frac{1}{2}})\gamma_1}{\rho}, \gamma_2 +\frac{(3+\zeta^{-\frac{1}{2}})(1+\zeta^{-\frac{1}{2}})\gamma_1^2}{2\rho} +\frac{\|z^{*}\|(1+\zeta^{-\frac{1}{2}})\gamma_1}{\rho} \right\} 
\end{align*}
and increasing it if necessary, we arrive at the conclusion.

\ref{t:full_rate_sublinear}: Following the arguments in \cite[Theorem~5.1(iv)]{BDL22} and \cite[Theorem~2(iii)]{Attouch2007}, we find $\gamma_0 \in \mathbb{R}_{++}$ such that, for all $n\in \mathbb{N}^*$, 
\begin{align*}
\|(\mathbf{x}_{n},y_{n})-(\mathbf{x}^{*},y^{*})\|\leq \gamma_0 n^{-\frac{1-\kappa}{2\kappa-1}} \text{~~and~~}
|\mathcal{F}(\mathbf{x}_{n},y_{n},z_{n},y_{n-1})-\mathcal{F}(\mathbf{x}^{*},y^{*},z^{*},y^*)| \leq \gamma_0 n^{-\frac{2-2\kappa}{2\kappa-1}}.
\end{align*}
Similar to \ref{t:full_rate_linear}, for all $n\geq 2$, since $n-1\geq \frac{1}{2}n$ and $\frac{1-\kappa}{2\kappa-1} >0$, we derive that 
\begin{align*}
&\|(\mathbf{x}_{n},y_{n},z_{n})-(\mathbf{x}^{*},y^{*},z^{*})\| \leq (2\gamma_0+\gamma_1) n^{-\frac{1-\kappa}{2\kappa-1}},\\
&\|\mathbf{Ax}_n+By_n-b\|\leq \frac{(1 +2^{\frac{1-\kappa}{2\kappa-1}})\gamma_1}{\rho} n^{-\frac{1-\kappa}{2\kappa-1}},\\
&|\mathcal{L}_{\rho}(\mathbf{x}_{n},y_{n},z_{n})- \mathcal{L}_{\rho}(\mathbf{x}^{*},y^{*},z^{*})|\leq \gamma_2 n^{-\frac{2-2\kappa}{2\kappa-1}},
\end{align*}
and that
\begin{align*}
|F(\mathbf{x}_n,y_n)-F(\mathbf{x}^{*},y^{*})| &\leq \gamma_2 n^{-\frac{2-2\kappa}{2\kappa-1}}+ \left(\gamma_1 n^{-\frac{1-\kappa}{2\kappa-1}} + \|z^{*}\|\right)\frac{(1 +2^{\frac{1-\kappa}{2\kappa-1}})\gamma_1}{\rho} n^{-\frac{1-\kappa}{2\kappa-1}} + \frac{(1 +2^{\frac{1-\kappa}{2\kappa-1}})^2\gamma_1^2}{2\rho} n^{-\frac{2-2\kappa}{2\kappa-1}}\\
&\leq \left(\gamma_2 +\frac{(3 +2^{\frac{1-\kappa}{2\kappa-1}})(1 +2^{\frac{1-\kappa}{2\kappa-1}})\gamma_1^2}{2\rho} +\frac{\|z^{*}\|(1 +2^{\frac{1-\kappa}{2\kappa-1}})\gamma_1}{\rho} \right)n^{-\frac{1-\kappa}{2\kappa-1}},
\end{align*}
where $\gamma_1 :=\frac{\ell_H\gamma_0 +(1 +2^{\frac{1-\kappa}{2\kappa-1}})\nu\ell_{\psi}\gamma_0}{\sqrt{\lambda}}$, $\gamma_2 :=\gamma_0 +c(1 +2^{\frac{1-\kappa}{2\kappa-1}})^2\gamma_0^2$, and the last inequality follows from the fact that $n^{-\frac{2-2\kappa}{2\kappa-1}}<n^{-\frac{1-\kappa}{2\kappa-1}}$ for $\kappa\in(\frac{1}{2},1)$. By setting $\overline{\gamma}$ in the same way as in \ref{t:full_rate_linear}, we obtain the conclusion.
\end{proof}

In Theorem~\ref{t:full}, to obtain the convergence of the full sequence generated by Algorithm~\ref{algo:BPLA}, we require that $\mathcal{F}(\mathbf{x},y,z,t) :=\mathcal{L}_\rho(\mathbf{x},y,z) +\frac{(\ell_H+2\nu\ell_{\psi})\nu\ell_{\psi}}{\lambda\rho}\|y-t\|^2$ is a KL function. It is worthwhile mentioning that if the objective function $F(\mathbf{x},y)$ is a semi-algebraic function, then so is $\mathcal{F}(\mathbf{x},y,z,t)$, and hence $\mathcal{F}(\mathbf{x},y,z,t)$ is a KL function with exponent $\kappa \in [0, 1)$; see, e.g., \cite[Example~1]{Attouch2007}.

\section{Numerical results}
\label{sec:numerical_result}

In this section, we provide the numerical results of our proposed algorithm for two case studies: RPCA with modified regularization, and an DC-OPF problem which considers optimal photovoltaic systems placement. All of the experiments are performed in MATLAB R2021b on a 64-bit laptop with Intel(R) Core(TM) i7-1165G7 CPU (2.80GHz) and 16GB of RAM.

\subsection{RPCA with modified regularization}
\label{case:RPCA}

We consider the following RPCA model introduced in Example~\ref{ex:RPCA}
\begin{align}\label{problem:RPCA_org}
    \min_{L,S,T \in \mathbb{R}^{m\times d}} \|L\|_{*} + \tau\|S\|_1  + \frac{\gamma}{2}\|T-M\|_F^2 \quad \text{s.t.~} T=L+S.
\end{align}
We modify the problem \eqref{problem:RPCA_org} as follows
\begin{align}\label{problem:RPCA}
    \min_{L,S,T \in \mathbb{R}^{m\times d}} \|L\|_{*} + \tau\|S\|_1 - \tau \|S\| + \frac{\gamma}{2}\|T-M\|_F^2 \quad \text{s.t.~} T=L+S,
\end{align}
where $\|\cdot\|_1 -  \|\cdot\|$ is the modified regularization. In order to use Algorithm~\ref{algo:BPLA}, we let $f_1(L)= \|L\|_{*}$, $f_2(S)= \tau \|S\|_1$, $H(T)=\frac{\gamma}{2}\|T-M\|_F^2$, $P\equiv 0$, $G(S)=\tau \|S\|$, $D_{\phi_1}(L,L_n)=\frac{\alpha}{2}\|L-L_n\|_F^2$, and $D_{\phi_2}(S,S_n)=\frac{\alpha}{2}\|S-S_n\|_F^2$ . The coefficient matrices are defined by $A_1=A_2=\mathbf{I}_{m\times m}$, and $B=-\mathbf{I}_{m\times m}$. Note that $b=0$ in this case, and for all $n\in\mathbb{N}$ we fix $\mu_n=1,~\nu_n=0$. The problem \eqref{problem:RPCA} can be solved by Algorithm~\ref{algo:BPLA} with the following steps
\begin{align*}
\begin{cases}
    L_{n+1} = \argmin_{L \in \mathbb{R}^{m\times d}}~ \|L\|_{*} - \langle \tau g_{1,n}, L \rangle + \langle Z_n, L \rangle + \frac{\rho}{2}\|L + S_n - T_n\|_F^2 + \frac{\alpha}{2}\|L-L_n\|^2_F,\\
    S_{n+1}= \argmin_{S\in \mathbb{R}^{m\times d}}~ \tau\|S\|_1 - \langle \tau g_{2,n}, S \rangle + \langle Z_n, S \rangle + \frac{\rho}{2}\|L_{n+1} + S - T_n\|_F^2+ \frac{\alpha}{2}\|S-S_n\|^2_F, \\
     T_{n+1} = \argmin_{T\in \mathbb{R}^{m\times d}}~ \frac{\gamma}{2}\|T-M\|^2_F - \langle Z_n, T \rangle + \frac{\rho}{2}\|L_{n+1} + S_{n+1} - T\|^2_F,\\
     Z_{n+1}= Z_n + \rho (L_{n+1} + S_{n+1} - T_{n+1}),
\end{cases}
\end{align*}
where $g_n=(g_{1,n},g_{2,n}) \in \partial_L \|\cdot\| ((L_n,S_n))$. In this case, since $G(S)=\tau \|S\|$, $g_{1,n}$ is a zero matrix with size $m\times d$. This updating scheme can be rewritten as
\begin{align*}
\begin{cases}
    L_{n+1} = \argmin_{L \in \mathbb{R}^{m\times d}} ~ \frac{1}{\rho+\alpha}\|L\|_{*} + \frac{1}{2}\left\|L - \left(\frac{-Z_n-\rho S_n +\rho T_n +\alpha L_n}{\rho+\alpha} \right)    \right\|_F^2,\\
    S_{n+1}=\argmin_{S \in \mathbb{R}^{m\times d}} ~ \frac{\tau}{\rho+\alpha}\|S\|_1 + \frac{1}{2}\left\|S - \left(\frac{\tau g_{2,n}-Z_n -\rho L_{n+1} +\rho T_n +\alpha S_n}{\rho+\alpha} \right)    \right\|_F^2,\\
    T_{n+1}=\frac{\gamma M +Z_n +\rho L_{n+1}+\rho S_{n+1}}{\gamma+\rho},\\
    Z_{n+1}= Z_n + \rho (L_{n+1} + S_{n+1} - T_{n+1}).
\end{cases}
\end{align*}
Both of the $L$ and $S$ subproblems have closed-form solutions, which are
\begin{align*}
    &L_{n+1}= \mathcal{P}_{\frac{1}{\rho+\alpha}} \left(\frac{-Z_n-\rho S_n +\rho T_n +\alpha L_n}{\rho+\alpha}\right),~S_{n+1}=\mathcal{S}_{\frac{\tau}{\rho+\alpha}}\left(\frac{\tau g_{2,n}-Z_n -\rho L_{n+1} +\rho T_n +\alpha S_n}{\rho+\alpha}\right),
\end{align*}
where $\mathcal{P}_c(\cdot)$ denotes the \emph{soft shrinkage} operator imposed on the singular values of the input matrix \cite{Cai2010}, $\mathcal{S}_c(\cdot)$ denotes the \emph{soft shrinkage} operator imposed on all entries of the input matrix \cite{RPCA}, and $c$ is the threshold value. The subgradient $g_{2,n}$ is given by $u_1 v_1^\top$, where $u_1$ and $v_1$ are the first left and right singular column vectors which are obtained via the singular value decomposition of $S_n$. We randomly generate the ground truth matrices for this case study by the procedures described in Appendix~\ref{appen:secA1}. We set $\tau=1/\sqrt{\max(m,d)}$, $\alpha=10^{-2}$ , and $\gamma=1$ for all test cases. To apply Algorithm~\ref{algo:BPLA}, we choose $\rho=2 + 10^{-10}$ (since $\lambda =1,~\ell_H = \gamma$). We compare our proposed algorithm with the three-block ADMM (ADMM-3) used in \cite{RPCA}, with their corresponding Lagrangian penalty term $\rho=2$. The ADMM-3 solves problem \eqref{problem:RPCA_org} while our algorithm solves problem \eqref{problem:RPCA}. The noise parameter $\Gamma$ is set to $10^{-2}$ and $2\times 10^{-2}$, respectively. The algorithms are terminated when the relative change between the two consecutive iterates is less than $10^{-6}$, specifically
\begin{align*}
    \frac{\|(L_{n+1},S_{n+1},T_{n+1})-(L_{n},S_{n},T_{n})\|_F}{\|(L_{n},S_{n},T_{n})\|_F+1} \leq 10^{-6}.
\end{align*}
We denote by $(\widehat{L}_n,\widehat{S}_n,\widehat{T}_n)$ the solution found at each iteration, and by $(L_O,S_O,T_O)$ the ground truth matrices that we want to recover by solving the problem \eqref{problem:RPCA}. The following relative error (RE) with respects to the ground truth is used to measure the quality of the solution
\begin{align*}
    \text{RE}:=\frac{\|(\widehat{L}_n,\widehat{S}_n,\widehat{T}_n)-(L_O,S_O,T_O)\|_F}{\|(L_O,S_O,T_O)\|_F +1}.
\end{align*}
All algorithms are run for 30 times for each test cases with a maximum number of iterations of 4000. At each time, the initial matrices $L$ and $S$ are created randomly, $T$ is initialized by the value of $M$, and $Z$ is initialized as a zero matrix. We perform our experiments on both square ($100\times 100$ and $1000\times 1000$ matrices) and rectangular matrices (here we arbitrary choose $200\times100$ and $2000 \times 1000$ matrices). The average CPU time (in seconds), RE, rank, and sparsity of the solutions are presented in Table~\ref{tab:RPCA_square} and Table~\ref{tab:RPCA_rec}, respectively. The tuple $(r,s)$ in the tables shows the rank and sparsity ratio of the randomly-generated ground truth. Although the proposed algorithm takes more time to run (mainly due to the singular value decomposition process required to compute the subgradient at each iteration), it outperforms ADMM-3 in terms of solution quality for both test cases. Interestingly, the sparsity of the solutions found by our algorithm is also closer to the ground truth values, which indicates the efficiency of the modified regularization. Our algorithm can successfully recover the rank of the ground truth matrices in all test cases for $\Gamma =10^{-2}$. Also, our algorithm can still recover better sparsity level in almost all test cases. When the size of the test matrices is larger, the CPU time of our algorithm is still comparable to that of the ADMM-3, and it still outperforms the ADMM-3 in terms of solution quality. When $\Gamma=2\times10^{-2}$ for the larger case, the rank and sparsity are more challenging to be recovered, and our proposed algorithm can still manage to obtain better sparsity level in almost all test cases.

\begin{center}

\scriptsize
\begin{longtable}[H]{c|lllllllll}

\caption{RPCA results on random generated square matrices}\label{tab:RPCA_square}\\
\hline
\multicolumn{10}{c}{Case $100 \times 100$}   \\
\hline

 $\Gamma$         & $(r, s)$ & Algorithms & Time & RE                  & Iteration    & Rank $\widehat{L}$ & Sparsity $\widehat{S}$ & Rank $L_O$ & Sparsity $S_O$ \\
 \hline
$10^{-2}$        & (10, 0.05) & ADMM-3            & 0.10 & 1.3946E-02          & 74        & 10                 & 543                    & 10         & 500            \\
                         &            & \textbf{BPL-ADMM} & 0.18 & \textbf{1.3920E-02} & 76        & 10                 & \textbf{541}           & 10         & 500            \\
                         & (10, 0.1)  & ADMM-3            & 0.13 & 1.7108E-02          & 101       & 10                 & 1125                   & 10         & 1000           \\
                         &            & \textbf{BPL-ADMM} & 0.23 & \textbf{1.7068E-02} & 103       & 10                 & \textbf{1121}          & 10         & 1000           \\
                         & (15, 0.05) & ADMM-3            & 0.11 & 1.3751E-02          & 90        & 15                 & 755                    & 15         & 500            \\
                         &            & \textbf{BPL-ADMM} & 0.21 & \textbf{1.3715E-02} & 93        & 15                 & \textbf{753}           & 15         & 500            \\
                         & (15, 0.1)  & ADMM-3            & 0.18 & 1.8733E-02          & 139       & 15                 & 1443                   & 15         & 1000           \\
                         &            & \textbf{BPL-ADMM} & 0.33 & \textbf{1.8710E-02} & 143       & 15                 & \textbf{1434}          & 15         & 1000           \\
                         & (20, 0.05) & ADMM-3            & 0.22 & 1.6311E-02          & 170       & 20                 & 1199                   & 20         & 500            \\
                         &            & \textbf{BPL-ADMM} & 0.41 & \textbf{1.6291E-02} & 179       & 20                 & \textbf{1196}          & 20         & 500            \\
                         & (20, 0.1)  & ADMM-3            & 0.34 & 2.1198E-02          & 269       & 20                 & 1973                   & 20         & 1000           \\
                         &            & \textbf{BPL-ADMM} & 0.61 & \textbf{2.1191E-02} & 284       & 20                 & \textbf{1969}          & 20         & 1000           \\
\hline
$2\times10^{-2}$ & (10, 0.05) & ADMM-3            & 0.10 & 1.3907E-02          & 62        & 10                 & 595                    & 10         & 500            \\
                         &            & \textbf{BPL-ADMM} & 0.15 & \textbf{1.3884E-02} & 64        & 10                 & \textbf{591}           & 10         & 500            \\
                         & (10, 0.1)  & ADMM-3            & 0.13 & 1.7392E-02          & 103       & 10                 & 1197                   & 10         & 1000           \\
                         &            & \textbf{BPL-ADMM} & 0.23 & \textbf{1.7347E-02} & 107       & 10                 & \textbf{1194}          & 10         & 1000           \\
                         & (15, 0.05) & ADMM-3            & 0.10 & 1.3972E-02          & 83        & 15                 & 851                    & 15         & 500            \\
                         &            & \textbf{BPL-ADMM} & 0.19 & \textbf{1.3949E-02} & 85        & 15                 & \textbf{848}           & 15         & 500            \\
                         & (15, 0.1)  & ADMM-3            & 0.18 & 1.8635E-02          & 138       & 15                 & 1561                   & 15         & 1000           \\
                         &            & \textbf{BPL-ADMM} & 0.32 & \textbf{1.8633E-02} & 142       & 15                 & \textbf{1556}          & 15         & 1000           \\
                         & (20, 0.05) & ADMM-3            & 0.22 & 1.6348E-02          & 170       & 20                 & 1295                   & 20         & 500            \\
                         &            & \textbf{BPL-ADMM} & 0.41 & \textbf{1.6335E-02} & 181       & 20                 & \textbf{1291}          & 20         & 500            \\
                         & (20, 0.1)  & ADMM-3            & 0.27 & 2.1559E-02          & 215       & 20                 & 2086                   & 20         & 1000           \\
                         &            & \textbf{BPL-ADMM} & 0.48 & \textbf{2.1557E-02} & 225       & 20                 & \textbf{2080}          & 20         & 1000 	       \\
\hline
\multicolumn{10}{c}{Case $1000 \times 1000$}   \\
\hline

$10^{-2}$        & (10, 0.05) & ADMM-3            & 23.99 & 2.5864E-03          & 59        & 10                 & 51210                  & 10         & 50000          \\
                         &            & \textbf{BPL-ADMM} & 35.36 & \textbf{2.5860E-03} & 60        & 10                 & \textbf{51205}         & 10         & 50000          \\
                         & (10, 0.1)  & ADMM-3            & 26.66 & 3.6126E-03          & 66        & 10                 & 100118                 & 10         & 100000         \\
                         &            & \textbf{BPL-ADMM} & 38.65 & \textbf{3.6122E-03} & 67        & 10                 & \textbf{100111}        & 10         & 100000         \\
                         & (15, 0.05) & ADMM-3            & 23.57 & 2.2652E-03          & 60        & 15                 & 51903                  & 15         & 50000          \\
                         &            & \textbf{BPL-ADMM} & 36.14 & \textbf{2.2648E-03} & 61        & 15                 & \textbf{51901}         & 15         & 50000          \\
                         & (15, 0.1)  & ADMM-3            & 27.85 & 3.0318E-03          & 67        & 15                 & 100882                 & 15         & 100000         \\
                         &            & \textbf{BPL-ADMM} & 38.18 & \textbf{3.0315E-03} & 68        & 15                 & \textbf{100880}        & 15         & 100000         \\
                         & (20, 0.05) & ADMM-3            & 25.80 & 2.0584E-03          & 61        & 20                 & 52535                  & 20         & 50000          \\
                         &            & \textbf{BPL-ADMM} & 36.78 & \textbf{2.0582E-03} & 62        & 20                 & \textbf{52533}         & 20         & 50000          \\
                         & (20, 0.1)  & ADMM-3            & 27.20 & 2.6930E-03          & 68        & 20                 & 101744                 & 20         & 100000         \\
                         &            & \textbf{BPL-ADMM} & 38.04 & \textbf{2.6926E-03} & 70        & 20                 & \textbf{101737}        & 20         & 100000         \\
\hline
$2\times10^{-2}$ & (10, 0.05) & ADMM-3            & 21.77 & 3.2703E-03          & 61        & 87                 & 151937                 & 10         & 50000          \\
                         &            & \textbf{BPL-ADMM} & 29.04 & \textbf{3.2700E-03} & 62        & 87                 & \textbf{151913}        & 10         & 50000          \\
                         & (10, 0.1)  & ADMM-3            & 27.69 & 4.2791E-03          & 71        & 119                & \textbf{190550}        & 10         & 100000         \\
                         &            & \textbf{BPL-ADMM} & 35.16 & \textbf{4.2786E-03} & 72        & 119                & 190551        & 10         & 100000         \\
                         & (15, 0.05) & ADMM-3            & 25.68 & 2.8604E-03          & 62        & 89                 & 153312                 & 15         & 50000          \\
                         &            & \textbf{BPL-ADMM} & 38.73 & \textbf{2.8600E-03} & 64        & 89                 & \textbf{153303}        & 15         & 50000          \\
                         & (15, 0.1)  & ADMM-3            & 29.72 & 3.6509E-03          & 72        & 122                & 192932                 & 15         & 100000         \\
                         &            & \textbf{BPL-ADMM} & 41.54 & \textbf{3.6505E-03} & 73        & 122                & 192932       & 15         & 100000         \\
                         & (20, 0.05) & ADMM-3            & 25.27 & 2.5759E-03          & 63        & 92                 & 154723                 & 20         & 50000          \\
                         &            & \textbf{BPL-ADMM} & 34.75 & \textbf{2.5755E-03} & 65        & 92                 & \textbf{154721}        & 20         & 50000          \\
                         & (20, 0.1)  & ADMM-3            & 30.00 & 3.2693E-03          & 73        & 125                & 194854                 & 20         & 100000         \\
                         &            & \textbf{BPL-ADMM} & 42.49 & \textbf{3.2690E-03} & 74        & 125                & \textbf{194851}        & 20         & 100000     \\     
\hline

\end{longtable}
\end{center}

\begin{center}
\scriptsize
\begin{longtable}[H]{c|lllllllll}
\caption{RPCA results on random generated rectangular matrices}\label{tab:RPCA_rec}\\
\hline
\multicolumn{10}{c}{Case $200 \times 100$}   \\
\hline

$\Gamma$       & $(r, s)$ & Algorithms & Time & RE                  & Iteration    & Rank $\widehat{L}$ & Sparsity $\widehat{S}$ & Rank $L_O$ & Sparsity $S_O$ \\
\hline
$10^{-2}$        & (10, 0.05) & ADMM-3            & 0.19 & 9.4198E-03          & 93        & 10                 & 1166                   & 10         & 1000           \\
                         &            & \textbf{BPL-ADMM} & 0.24 & \textbf{9.4045E-03} & 96        & 10                 & \textbf{1162}          & 10         & 1000           \\
                         & (10, 0.1)  & ADMM-3            & 0.23 & 1.2068E-02          & 112       & 10                 & 2343                   & 10         & 2000           \\
                         &            & \textbf{BPL-ADMM} & 0.25 & \textbf{1.2055E-02} & 115       & 10                 & \textbf{2341}          & 10         & 2000           \\
                         & (15, 0.05) & ADMM-3            & 0.25 & 1.0152E-02          & 126       & 15                 & 1708                   & 15         & 1000           \\
                         &            & \textbf{BPL-ADMM} & 0.31 & \textbf{1.0142E-02} & 130       & 15                 & \textbf{1705}          & 15         & 1000           \\
                         & (15, 0.1)  & ADMM-3            & 0.37 & 1.2684E-02          & 186       & 15                 & 2978                   & 15         & 2000           \\
                         &            & \textbf{BPL-ADMM} & 0.41 & \textbf{1.2681E-02} & 194       & 15                 & \textbf{2969}          & 15         & 2000           \\
                         & (20, 0.05) & ADMM-3            & 0.40 & 1.0112E-02          & 206       & 20                 & 2278                   & 20         & 1000           \\
                         &            & \textbf{BPL-ADMM} & 0.46 & \textbf{1.0104E-02} & 213       & 20                 & \textbf{2273}          & 20         & 1000           \\
                         & (20, 0.1)  & ADMM-3            & 0.72 & 1.3707E-02          & 364       & 20                 & 4072                   & 20         & 2000           \\
                         &            & \textbf{BPL-ADMM} & 0.85 & \textbf{1.3693E-02} & 378       & 20                 & \textbf{4068}          & 20         & 2000           \\
\hline
$2\times10^{-2}$ & (10, 0.05) & ADMM-3            & 0.19 & 1.0124E-02          & 93        & 10                 & 1495                   & 10         & 1000           \\
                         &            & \textbf{BPL-ADMM} & 0.21 & \textbf{1.0111E-02} & 95        & 10                 & \textbf{1488}          & 10         & 1000           \\
                         & (10, 0.1)  & ADMM-3            & 0.26 & 1.2467E-02          & 126       & 10                 & 2625                   & 10         & 2000           \\
                         &            & \textbf{BPL-ADMM} & 0.28 & \textbf{1.2452E-02} & 129       & 10                 & \textbf{2624}          & 10         & 2000           \\
                         & (15, 0.05) & ADMM-3            & 0.25 & 1.0044E-02          & 129       & 15                 & 1998                   & 15         & 1000           \\
                         &            & \textbf{BPL-ADMM} & 0.31 & \textbf{1.0041E-02} & 140       & 15                 & \textbf{1994}          & 15         & 1000           \\
                         & (15, 0.1)  & ADMM-3            & 0.31 & 1.2819E-02          & 157       & 15                 & 3415                   & 15         & 2000           \\
                         &            & \textbf{BPL-ADMM} & 0.35 & \textbf{1.2803E-02} & 162       & 15                 & \textbf{3410}          & 15         & 2000           \\
                         & (20, 0.05) & ADMM-3            & 0.39 & 1.1589E-02          & 202       & 20                 & 2860                   & 20         & 1000           \\
                         &            & \textbf{BPL-ADMM} & 0.45 & \textbf{1.1585E-02} & 209       & 20                 & \textbf{2856}          & 20         & 1000           \\
                         & (20, 0.1)  & ADMM-3            & 0.59 & 1.5005E-02          & 306       & 20                 & 4585                   & 20         & 2000           \\
                         &            & \textbf{BPL-ADMM} & 0.68 & \textbf{1.4994E-02} & 320       & 20                 & \textbf{4582}          & 20         & 2000             \\
\hline
\multicolumn{10}{c}{Case $2000 \times 1000$}   \\
\hline

$10^{-2}$        & (10, 0.05) & ADMM-3            & 67.14 & 1.9698E-03          & 89  & 10  & 153310          & 10 & 100000 \\
                         &            & \textbf{BPL-ADMM} & 78.36 & \textbf{1.9696E-03} & 90  & 10  & \textbf{153288} & 10 & 100000 \\
                         & (10, 0.1)  & ADMM-3            & 71.03 & 2.6300E-03          & 99  & 10  & 249807          & 10 & 200000 \\
                         &            & \textbf{BPL-ADMM} & 92.44 & \textbf{2.6298E-03} & 100 & 10  & \textbf{249800} & 10 & 200000 \\
                         & (15, 0.05) & ADMM-3            & 80.06 & 1.6739E-03          & 90  & 15  & 156377          & 15 & 100000 \\
                         &            & \textbf{BPL-ADMM} & 98.49 & \textbf{1.6736E-03} & 91  & 15  & \textbf{156374} & 15 & 100000 \\
                         & (15, 0.1)  & ADMM-3            & 74.23 & 2.2267E-03          & 100 & 15  & 254688          & 15 & 200000 \\
                         &            & \textbf{BPL-ADMM} & 99.27 & \textbf{2.2266E-03} & 101 & 15  & \textbf{254683} & 15 & 200000 \\
                         & (20, 0.05) & ADMM-3            & 69.10 & 1.5351E-03          & 75  & 20  & 160409          & 20 & 100000 \\
                         &            & \textbf{BPL-ADMM} & 89.65 & \textbf{1.5348E-03} & 78  & 20  & \textbf{160387} & 20 & 100000 \\
                         & (20, 0.1)  & ADMM-3            & 63.80 & 1.9954E-03          & 83  & 20  & 258402          & 20 & 200000 \\
                         &            & \textbf{BPL-ADMM} & 89.43 & \textbf{1.9950E-03} & 85  & 20  & \textbf{258383} & 20 & 200000 \\
\hline
$2\times10^{-2}$ & (10, 0.05) & ADMM-3            & 61.93 & 3.2736E-03          & 78  & 128 & 572449          & 10 & 100000 \\
                         &            & \textbf{BPL-ADMM} & 78.93 & \textbf{3.2732E-03} & 80  & 127 & \textbf{572427} & 10 & 100000 \\
                         & (10, 0.1)  & ADMM-3            & 62.20 & 3.8801E-03          & 86  & 119 & \textbf{635960} & 10 & 200000 \\
                         &            & \textbf{BPL-ADMM} & 81.42 & \textbf{3.8799E-03} & 87  & 119 & 635962          & 10 & 200000 \\
                         & (15, 0.05) & ADMM-3            & 64.73 & 2.7395E-03          & 95  & 129 & 573857          & 15 & 100000 \\
                         &            & \textbf{BPL-ADMM} & 83.01 & \textbf{2.7393E-03} & 96  & 129 & \textbf{573833} & 15 & 100000 \\
                         & (15, 0.1)  & ADMM-3            & 57.92 & 3.3192E-03          & 87  & 157 & 639129          & 15 & 200000 \\
                         &            & \textbf{BPL-ADMM} & 74.22 & \textbf{3.3191E-03} & 89  & 157 & \textbf{639117} & 15 & 200000 \\
                         & (20, 0.05) & ADMM-3            & 48.22 & 2.5090E-03          & 97  & 131 & 576133          & 20 & 100000 \\
                         &            & \textbf{BPL-ADMM} & 61.67 & \textbf{2.5090E-03} & 98  & 131 & 576133          & 20 & 100000 \\
                         & (20, 0.1)  & ADMM-3            & 61.68 & 2.9402E-03          & 109 & 160 & 640202          & 20 & 200000 \\
                         &            & \textbf{BPL-ADMM} & 76.83 & \textbf{2.9401E-03} & 110 & 160 & \textbf{640191} & 20 & 200000             \\
\hline

\end{longtable}
\end{center}

\subsection{DC-OPF with optimal photovoltaic system placement}
\label{subsec:dcopf}

We revisit the DC-OPF problem with optimal PV allocation based on the formulations given in \cite{Abraham2018,Tan2023}. The details of the variables as well as the parameters are given in Table~\ref{tab:paramsvars} in the Appendix. The formulation is given as follows
\begin{subequations}
\begin{align}
    \min~~ & \left( \sum_{i\in N}Cu_i + \sum_{i\in N}\left(a_i(P_{i}^{G})^2+b_iP_{i}^{G}+c_i\right) \right) \tag{OPF-1}\label{eq:objOPF0}\\
    \text{subject to~~} 
    &P_{i}^{PV}+P_{i}^{G}- \sum_{j \in M_i}b_{ij}(\Theta_i-\Theta_j)\geq D_i, \quad \forall i \in N \label{eq:flowinout}\\
    &\sum_{i\in N}u_i \geq \frac{\sum_{i\in N}D_i}{2\overline{P^{PV}}} \label{eq:penrate}\\
    & b_{ij}(\Theta_i-\Theta_j)\leq \overline{P}, \quad \forall i \in N,~j \in M_i \label{eq:thermal}\\
    &\Theta_i \in [0,2\pi], \quad \forall i \in N\\
     &0\leq P_{i}^{PV}\leq u_i\overline{P^{PV}}, \quad \forall i \in N \label{eq:existX}\\
     &0\leq P_{i}^{G}\leq \overline{P^G}, \quad \forall i \in N \label{eq:bounds}\\
     &u_i \in \{0,1\}, \quad \forall i \in N.
\end{align}
\end{subequations}
The objective function aims at minimizing the installation cost of PV systems and the operation cost of conventional generators. Constraint \eqref{eq:flowinout} ensures that the demand is met by the power flow from both conventional generators and renewable sources. Constraint \eqref{eq:penrate} makes sure that we have enough PV systems to achieve 50 percent renewable penetration rate. Constraint \eqref{eq:thermal} is the limit of the power flow between any two nodes. The remaining constraints define the boundaries of the decision variables. Using the binary relaxation technique used in \cite{Tan2023}, we have that
\begin{align*}
    &
    (\forall i\in N,\quad u_i \in \{0,1\}) \iff \left(\forall i\in N,\ u_i \in [0,1] \text{~and~} \sum_{i\in N} \left(u_i^2-u_i \right)\geq 0\right).
\end{align*}
Taking into account of the above equivalence, a plausible alternative optimization model for the OPF problem with PV is as follows
\begin{subequations}
\begin{align}
    \min~~ & \left(\sum_{i\in N}Cu_i + \sum_{i\in N}\left(a_i(P_{i}^{G})^2+b_iP_{i}^{G}+c_i\right) -\gamma \sum_{i\in N}(u_i^2 - u_i) \right)\tag{OPF-M}\label{eq:objOPF1}\\
    \text{subject to~~} &\eqref{eq:flowinout} \to \eqref{eq:bounds}\\
        & u_i \in [0,1], \quad \forall i \in N.
\end{align}
\end{subequations}
In order to apply our algorithm, we let $\mathbf{x}=(x_1,x_2,\dots,x_{|N|})$, in which $x_i=[P_i^{PV},P_i^G,\Theta_i,u_i]^\top$ denotes the set of variables associated with bus $i$, for $i \in \{1,\dots,|N|\}$. Here we use the notation $|N|$ to denote the cardinality of the set $N$. Similarly, let $f_i=Cu_i + a_i(P_{i}^{G})^2+b_iP_{i}^{G}+c_i$ be the cost function associated with bus $i$ and $G(\mathbf{x})=\gamma \sum_{i\in N}(u_i^2 - u_i)$, we can reformulate the problem \eqref{eq:objOPF1} as follows
\begin{subequations}
    \begin{align}
        \min_{x_i}~~ &\left(\sum_{i \in N}f_i(x_i) -G(\mathbf{x})\right)\tag{OPF-M2}\label{eq:objOPF2}~\text{s.t.~~} \mathbf{Ax} \leq b,
    \end{align}
\end{subequations}
where $A_i$ are coefficient matrices associated with each $x_i$, and $b$ is the vector form by the right hand side of the problem \eqref{eq:objOPF1}. We then introduce slack variables and then penalise the slack variables to transform all inequalities into equalities as in \cite{Abraham2018}. Two worth noting points are that our DC-OPF problem is nonconvex due to the presence of binary variables (and later by the reformulation into difference-of-convex form), while the DC-OPF problem considered in \cite{Abraham2018} is convex, and that our algorithm is more general and can also cover their ADMM version. Let $p$ be the number of inequality constraints, where $p=9|N|+\sum_{i\in N}|M_i| +1$. By letting $y=(y_1,y_2,\dots,y_p)$ be the slack variables, the problem \eqref{eq:objOPF2} can be equivalently reformulated as follows
\begin{align}
\min_{x_i}~~ &\left(\sum_{i \in N}f_i(x_i) -G(\mathbf{x})\right)\tag{OPF-M2'}\label{eq:objOPF2'}~        \text{s.t.~~} \mathbf{Ax} +y = b,~ y \in \mathbb{R}_+^p.
\end{align}
Here, we observe that
\begin{align*}
y \in \mathbb{R}_+^p \iff \dist(y,\mathbb{R}^{p}_{+}) =0 \iff \dist^2(y,\mathbb{R}^{p}_{+}) \leq 0   
\end{align*}
and that $y\mapsto \dist^2(y,\mathbb{R}^{p}_{+})$ is a differentiable function whose gradient $\nabla\dist^2(y,\mathbb{R}^{p}_{+}) =2(y -\operatorname{Proj}_{\mathbb{R}_+^p}(y))$ is $2$-Lipschitz continuous. So, the final relaxed problem is given as follows
\begin{align}
\min_{x_i}~~ &\left(\sum_{i \in N}f_i(x_i) +\frac{\eta}{2}\dist^2(y,\mathbb{R}^{p}_{+})-G(\mathbf{x})\right)\tag{OPF-M3}\label{eq:objOPF3}~\text{s.t.~~} \mathbf{Ax} +y = b,
\end{align}
where $\eta$ is a positive parameter. Now, \eqref{eq:objOPF3} is a special case of problem \eqref{eq:prob}, in which $f_i(x_i)=Cu_i + a_i(P_{i}^{G})^2+b_iP_{i}^{G}+c_i$, $H(y)=\frac{\eta}{2}\dist^2(y,\mathbb{R}^{p}_{+})$, $G(\mathbf{x})=\gamma \sum_{i\in N}(u_i^2 - u_i)$, $P(\mathbf{x})\equiv 0$, $B=\mathbf{I}_{p\times p}$, and $A_i$ is constructed as described in Appendix~\ref{appen:A_iconstruct}. Each function $f_i(x_i)$ can be rewritten in quadratic form as $f_i(x_i)=\frac{1}{2}x_i^\top Q_ix_i +q_i^\top x_i +c_i$, in which
\begin{align*}
    Q_i=\begin{bmatrix} 0 & 0 & 0 & 0  \\ 0 &2a_i& 0 &0  \\ 0& 0& 0& 0 \\ 0& 0& 0& 0  \end{bmatrix},~ q_i=\begin{bmatrix}
        0\\
        b_i\\
        0\\
        C
    \end{bmatrix}.
\end{align*}
Using Algorithm~\ref{algo:BPLA} with $D_{\phi_i}(x_i,x_{i,n})=\frac{\alpha}{2}\|x_i-x_{i,n}\|^2$, where $\alpha=10^{-2}$, and we also fix $\mu_n=1,~\nu_n=0$ for all $n\in \mathbb{N}$, the updates of the variables are given as follows
\begin{align*}
\begin{cases}
    x_{i,n+1}=-\left(Q_i+\rho A_i^\top A_i+\alpha \mathbf{I}_{p\times p}\right)^{-1}\left(q_i-\nabla_iG(\mathbf{x}_n)+\rho A_i^\top\left(\sum_{k=1}^{i-1}A_kx_{k,n+1}+\sum_{k=i+1}^{N}A_kx_{k,n}+ \right. \right.\\
   \left. \left. \qquad \qquad \quad y_n+\frac{z_n}{\rho}-b\right)-\alpha x_{i,n}\right),~i\in \{1,\dots,|N|\},\\
    y_{j,n+1}=\max \left\{0,\left(-\mathbf{Ax}_{n+1}-\frac{z_n}{\rho} +b \right)_j \right\}  + \frac{\rho}{\eta+\rho} \min \left\{0,\left(-\mathbf{Ax}_{n+1}-\frac{z_n}{\rho} +b \right)_j\right\},~j\in\{1,\dots,p\},\\
    z_{n+1}=z_n+\rho(\mathbf{Ax}_{n+1}+y_{n+1}-b).
\end{cases}
\end{align*}
Note that the closed-form update of $y$ can be derived with the same approach used in \cite[Theorem 1]{Abraham2018}. The proposed algorithm is run for a maximum of 4000 iterations and it is terminated when
\begin{align*}
    \frac{\|(\mathbf{x}_{n+1},y_{n+1},z_{n+1})-(\mathbf{x}_{n},y_{n},z_{n})\|}{\|(\mathbf{x}_{n},y_{n},z_{n})\|} < 10^{-5}.
\end{align*}
The test systems considered here are the 14-bus system used in \cite[Case study 4.2]{Tan2023}, and the 141-bus test case taken from MATPOWER \cite{Zimmerman2011}. We set $(\eta, \rho, \gamma)=(900, 1800+10^{-10},80)$ for the 14-bus case, and then $(\eta, \rho, \gamma)=(3000, 6000+10^{-10},80)$ for the 141-bus case. We compare the performance of our proposed algorithm with the \emph{generalized proximal point algorithm} (GPPA) \cite{An2016}, and the \emph{proximal subgradient algorithm with extrapolation} (PSAe) \cite{Tan2023}. We run all algorithms for 30 times, initialized at the lower bound of the variables, for a maximum of 4000 iterations. The results are given in Table~\ref{tab:compareDCOPF}. The objective function values are calculated using \eqref{eq:objOPF0}.

\begin{table}[H]
\centering
\caption{Comparison of GPPA, PSAe, and the proposed algorithm on 30 runs of the DC OPF model}
\label{tab:compareDCOPF}
\footnotesize
\begin{tabular}{c|ccc|ccc} 
\hline
Test case & ~ & 14-bus &~ &~ &141-bus &~ \\
\hline
Algorithm     & GPPA      & PSAe & \textbf{BPL-ADMM} & GPPA      & PSAe & \textbf{BPL-ADMM}  \\ 
\hline
Mean objective function value  & 2.4347		&2.4346 & 2.4350 & -  & - & 9.8256 
   \\
Best objective function value & 2.4346		&2.4346 & 2.4348 &- &- & 9.8147
   \\
Mean iteration number               & 8             & 20  & 404 &- &- &2105     \\
Mean CPU time (seconds)          & 0.13      & 0.17 &  \textbf{0.08} &- &- &640.67    \\
\hline
\end{tabular}
\begin{tablenotes}\footnotesize
\item[*] The sign ``-'' indicates that the algorithm cannot converge after a maximum running time of 2 hours.
\end{tablenotes}
\end{table}

The best solution found in the 14-bus case using this proposed BPL-ADMM algorithm is approximately the same as the one found in \cite{Tan2023}. This is due to the relaxation used during the modelling process. However, it can be seen that the CPU time is shorter, due to the availability of closed-form solutions of the subproblems. This also makes each iteration of the proposed algorithm less computationally expensive than those of the remaining algorithms. Moreover, it can be observed that for the larger test case, the proposed algorithm benefits from the closed-form solutions of the subproblems and can converge to a stationary point within an acceptable running time. The best solutions found by the proposed algorithm are shown in Figure~\ref{fig:14bus} and Figure~\ref{fig:141bus}, respectively.

\begin{figure}[H]
    \centering
    \includegraphics[width=0.6\linewidth]{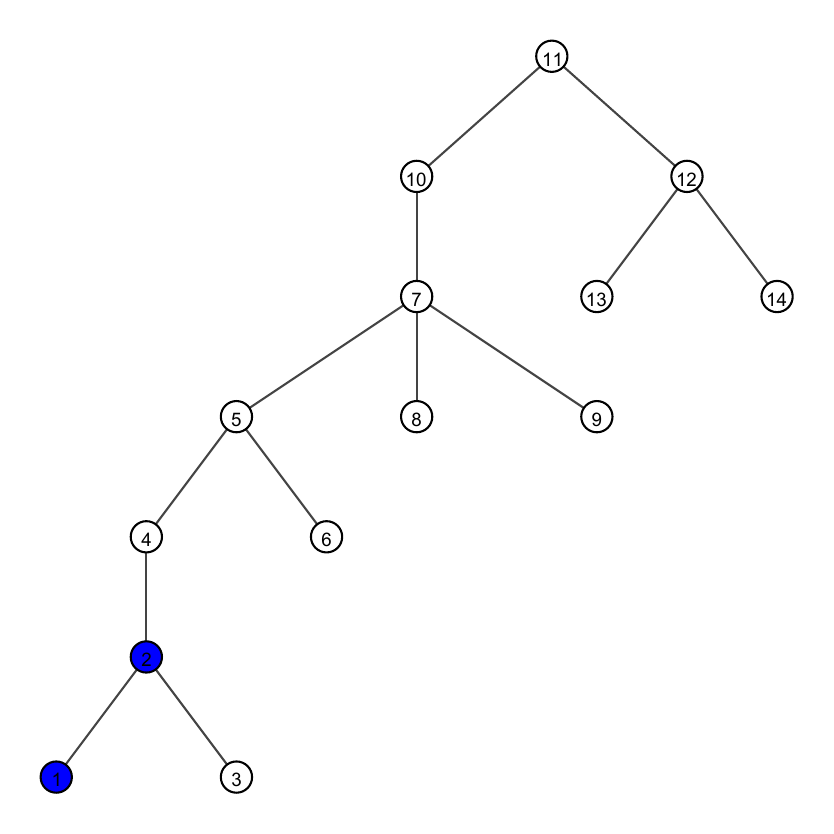}
    \caption{Best solution found for 14-bus case (the blue color indicates that there is a PV system at the bus).}
    \label{fig:14bus}
\end{figure}

\begin{figure}[H]
    \centering
    \includegraphics[width=0.7\linewidth]{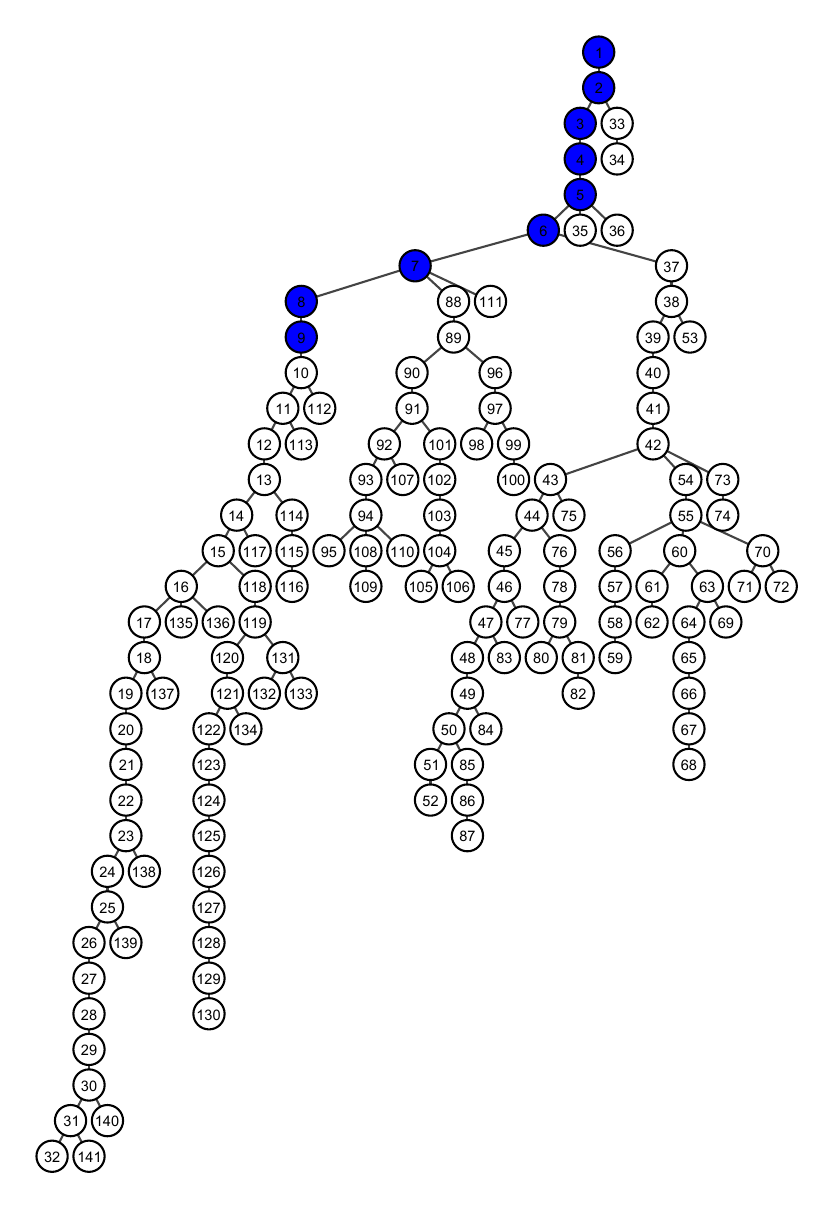}
    \caption{Best solution found for 141-bus case.}
    \label{fig:141bus}
\end{figure}

\section{Conclusion}
\label{sec:conclusion}

In this paper, we have proposed a splitting algorithm with linearization based on ADMM to solve a class of structured nonconvex and nonsmooth optimization problems, which can cover two important classes of problems in the literature. Our proposed structure and the proposed algorithm impose less restrictions on the convexity requirements than some algorithms existing in the current literature. The convergence of the whole sequence generated by our algorithm is proved with some mild additional assumptions and KL property. The efficiency of the proposed algorithm is illustrated on two important nonconvex optimization problems, and it shows competitive results in comparison with the existing algorithms.

\section*{Declarations}

\noindent\textbf{Acknowledgements}. The authors thank Rakibuzzaman Shah and Syed Islam from the Centre for New Energy Transition Research, Federation University Australia for providing the research data and the discussion on the power system optimization problem. They are also grateful to the anonymous reviewers for their insightful feedback during the revision of this manuscript.

\noindent\textbf{Funding}. TNP was supported by Henry Sutton PhD Scholarship Program from Federation University Australia. MND was partially supported by the Australian Research Council (ARC), project number DP230101749, by the PHC FASIC program, project number 49763ZL, and by a public grant from the Fondation Math\'ematique Jacques Hadamard (FMJH).

\noindent\textbf{Conflict of interest}. The authors declare no competing interests.

\noindent\textbf{Availability of data and materials}. All data generated or analyzed during this study are included in this article. In particular, the data for Case study~\ref{case:RPCA} were generated randomly and we explained how they were explicitly generated in Appendix~\ref{appen:secA1}. The data for Case study~\ref{subsec:dcopf} are available in Appendix~\ref{appen:secA2}.

\appendix

\section{Data generation for RPCA model}
\label{appen:secA1}

Let $M=L_O+S_O+N$, where $L_O$ and $S_O$ are the ground truth low-rank and sparse matrices, and $N$ is a Gaussian noise matrix. The following MATLAB code generates the matrix $M$:
\begin{align*}
    &\%\text{Change r and s according to Table 1}\\
    &r=10;~\% \text{rank of ground truth matrix}\\
    &s=0.05;~\% \text{sparsity ratio of ground truth matrix}\\
    &m=100;\\
    &n=100;\\
    &L=\texttt{randn}(m,r)*\texttt{randn}(r,n);\\ &S=\texttt{zeros}(m,n);\\
    &temp=\texttt{randperm}(m*n);\\
    &S(temp(1:\texttt{round}(s*m*n)))=\texttt{randn}(\texttt{round}(s*m*n),1);\\
    &noise =0.01; \% ~\text{Noise level};\\
    &N= \texttt{randn}(m,n)*noise;\\
    &T=L+S;\\
    &M=T+N;
\end{align*}

\section{Data of Case study~\ref{subsec:dcopf}}
\label{appen:secA2}

\begin{center}
\footnotesize
\begin{longtable}[H]{l| p{0.45\columnwidth} |p{0.27\columnwidth}}
\caption{Parameters and variables of the DC-OPF problem}
\label{tab:paramsvars}\\
\hline
\multicolumn{1}{c|}{Parameters}          & \multicolumn{1}{|c}{Description}                                                                                                                                      & \multicolumn{1}{|c}{Values}                                                                                                    \\ \hline
$N$                 & Set of buses                                                                                                                                                          & $\{1,2,\dots,14\}$                                                                                          \\
$M_i$                 & Set of buses that are connected to bus $i$,                                                                                                 &                                                                 \\

$D_i$               & Active power demand at bus $i$                                                                                                                                                     & See \cite[Table 7]{Tan2023}                                                                        \\

$b_{ij}$            & Susceptance value of the line connecting bus $i$ and bus $j$                                                                                                          & See \cite[Table 8]{Tan2023}                                                                                                                                              \\

$C$                 & Unit installation cost of a PV at bus $i$                                                                                                                                  & 1                                                                                                 \\
$a_i,b_i,c_i$             & Coefficients associated with the cost of generator installed at bus $i$. These coefficients for a diesel generator are derived from \cite{Tan2023}. When there is no generator at bus $i$, $a_i=b_i=c_i=0$& $0.246$, $0.084$,  $0.433$ \\
$\overline{P^{PV}}$ & Active power capacity of PVs                                                                                                                                                 & 800 kW                                                                                                     \\

$\overline{P^G_i}$    & Active power capacity of diesel generator at bus $i$. When there is no generator attached to bus $i$, $\overline{P^G_i}=0$                                                                                                                                          & 5000 kW                                                                                                    \\

$\overline{P}$ & Transmission limits of lines                                                                                                                                          & 3000 kW      \\

$\gamma$ & Relaxation parameter & 
\\ \hline
\multicolumn{1}{c|}{Variables}           &                                                                                                                                                                       &                                                                                                           \\ \hline
$P_{i}^{PV}$        & Active power generated by a PV system at bus $i$, $i\in N$ &   \\

$P_{i}^{G}$         & Active power generated by diesel generator at bus $i$, $i \in N$                                                                                                                        &                                                                                                           \\

$u_i $            & 1 if there is a PV system needed at bus $i$, and 0 otherwise, $i\in N$                                                                                                           &                                                                                                           \\

$\Theta_i$          & Voltage angle of bus $i$, $i\in N$                                                                                                                                              &                                                                                                           \\

\hline
\end{longtable}

\end{center}

The values of the parameters in Table~\ref{tab:paramsvars} are used for the 14-bus test case. We use the same parameter notations for the 141-bus test case. The parameters of the 141-bus test case (line configurations, load demand, diesel generator capacity) can be found within MATPOWER software. Note that we also use 800 kW PV for the 141-bus case. All calculations are performed on Per Unit (pu) values.

\section{Construction of matrices $A_i$ in the DC-OPF problem}
\label{appen:A_iconstruct}

We give an example of how the matrices $A_i$ can be constructed. For simplicity, we use a 2-bus example. The procedures are still the same if there are more buses. Consider the example given in Figure~\ref{fig:2bus}.

\begin{figure}[H]
\centering
\includegraphics[scale=0.2]{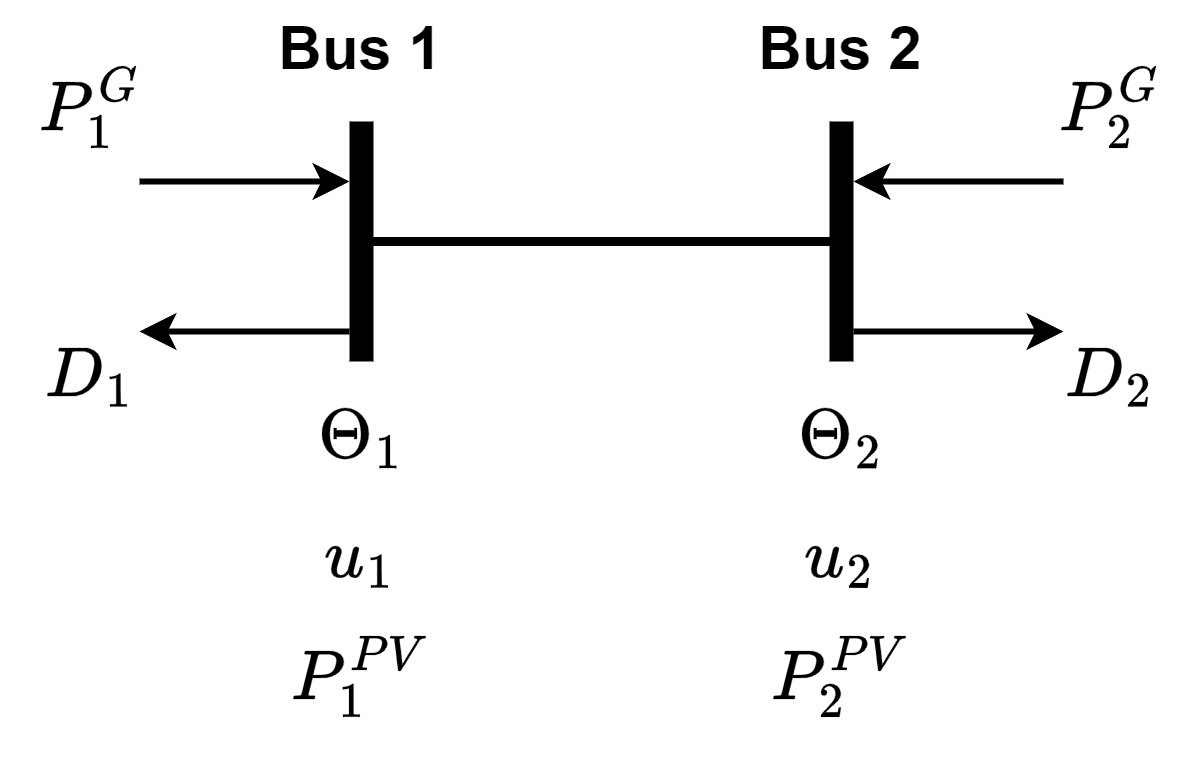}
\caption{Two-bus example.}
\label{fig:2bus}
\end{figure}
The problem \eqref{eq:objOPF3} in this case reads as
    \begin{align*}
        \min_{x_i}~~ &\left(\sum_{i =1}^{2}f_i(x_i) +\frac{\eta}{2}\dist^2(y,\mathbb{R}^{p}_{+})-G(\mathbf{x})\right)\\
        \text{subject to~~} &A_1x_1+A_2x_2 +y = b.
    \end{align*}
The matrices $A_1$, $A_2$, and vector $b$ are given by
\begin{align*}
    A_1= \begin{bmatrix}
-1 & -1 & b_{12} & 0 \\
0 & 0 & -b_{21} & 0 \\
-1 & 0 & 0 & 0 \\
0 & 0 & b_{12} & 0 \\
0 & 0 & -b_{21} & 0 \\
1 & 0 & 0 & -\overline{P^{PV}} \\
-1 & 0 & 0 & 0 \\
0 & 0 & 0 & 0 \\
0 & 0 & 0 & 0 \\
0 & 1 & 0 & 0 \\
0 & -1 & 0 & 0 \\
0 & 0 & 0 & 0 \\
0 & 0 & 0 & 0 \\
0 & 0 & 0 & 1 \\
0 & 0 & 0 & -1 \\
0 & 0 & 0 & 0 \\
0 & 0 & 0 & 0 \\
0 & 0 & 1 & 0 \\
0 & 0 & -1 & 0 \\
0 & 0 & 0 & 0 \\
0 & 0 & 0 & 0 \\
\end{bmatrix}, \quad A_2=\begin{bmatrix}
0 & 0 & -b_{12} & 0 \\
-1 & -1 & b_{21} & 0 \\
-1 & 0 & 0 & 0 \\
0 & 0 & -b_{12} & 0 \\
0 & 0 & b_{21} & 0 \\
0 & 0 & 0 & 0 \\
0 & 0 & 0 & 0 \\
1 & 0 & 0 & -\overline{P^{PV}} \\
-1 & 0 & 0 & 0 \\ 
0 & 0 & 0 & 0 \\
0 & 0 & 0 & 0 \\
0 & 1 & 0 & 0 \\
0 & -1 & 0 & 0 \\
0 & 0 & 0 & 0 \\
0 & 0 & 0 & 0 \\
0 & 0 & 0 & 1 \\
0 & 0 & 0 & -1 \\
0 & 0 & 0 & 0 \\
0 & 0 & 0 & 0 \\
0 & 0 & 1 & 0 \\
0 & 0 & -1 & 0 
\end{bmatrix}, \quad b= \begin{bmatrix}
-D_1 \\
-D_2 \\
-\frac{1}{2}(D_1+D_2) \\
\overline{P} \\
\overline{P} \\
0 \\
0 \\
0 \\
0 \\
\overline{P^G_1} \\
0 \\
\overline{P^G_2} \\
0 \\
1 \\
0 \\
1 \\
0 \\
2\pi \\
0 \\
2\pi \\
0 
\end{bmatrix}  
\end{align*}
By this way of construction, $A_i$ has full column rank.

\end{document}